\numberwithin{equation}{section}
\newcommand{\R}{{\mathbb R}}
\newcommand{\Z}{{\mathbb Z}}
\newcommand{\N}{{\mathbb N}}
\newcommand{\F}{{\mathcal F}}
\newcommand{\Var}{{\rm Var}}
\newcommand{\pr}[1]{\left(#1\right)}
\newcommand \abs[1]{\left|#1\right|}
\newcommand{\E}[1]{\mathbb E\left[#1\right]}
\newcommand{\var}[1]{\operatorname{Var}\left(#1\right)}
\newcommand{\cov}[2]{\operatorname{Cov}\left(#1,#2\right)}
\newcommand{\norm}[1]{\left\lVert #1 \right\rVert}
\newcommand \ens[1]{\left\{ #1\right\}}
\newcommand{\1}[1]{\mathbf{1}_{#1}}
\newcommand \PP{\mathbb P}
\newcommand{\Fca}{\mathcal{F}}
\newtheorem{lemma}{Lemma}[section]
\newtheorem{theorem}{Theorem}
\newtheorem{proposition}[lemma]{Proposition}
\newtheorem{corollary}[lemma]{Corollary}
\newtheorem{example}[lemma]{Example}
\theoremstyle{definition}
\newtheorem{remark}[lemma]{Remark}
\begin{document}

\title[U-statistics of local sample moments under weak dependence]
{U-statistics of local sample moments under weak dependence}
\author[H. Dehling]{Herold Dehling}
\author[D. Giraudo]{Davide Giraudo}
\author[S.~K.~Schmidt]{Sara K.~Schmidt}
\today

\address{
Fakult\"at f\"ur Mathematik, Ruhr-Universit\"at Bo\-chum, 
44780 Bochum, Germany}
\email{herold.dehling@rub.de}

\address{Institut de Recherche Math\'ematique Avanc\'ee, Universit\'e de Strasbourg, 7 rue Ren\'e Descartes, 67084 Strasbourg, France}
\email{dgiraudo@unistra.fr}

\address{
Fakult\"at f\"ur Mathematik, Ruhr-Universit\"at Bo\-chum, 
44780 Bochum, Germany}
\email{sara.schmidt@rub.de}

\keywords{U-Statistic, Triangular array, Central limit theorem}

\thanks{Supported in part by the 
Collaborative Research centre 823, {\em Statistical Modelling of Nonlinear Dynamic Processes}, of the German Research 
Foundation. S. K. Schmidt was additionally supported by the Friedrich-Ebert-Stiftung. }
\begin{abstract}
In this paper, we study the asymptotic distribution of some U-statistics whose entries are functions of empirical moments computed from non-overlapping consecutive blocks of an underlying weakly dependent process. The length of these blocks converges to infinity, and thus we consider U-statistics of triangular arrays. We establish asymptotic normality of such U-statistics. The results can be used 
to construct tests for changes of higher order moments.
\end{abstract}
\maketitle
\allowdisplaybreaks
\bibliographystyle{plainnat}
\section{Introduction}
 
Given some real-valued data $Y_1,\ldots,Y_n$ and a symmetric measurable function $h:\R^m\rightarrow \R$, we define the $U$-statistic with kernel $h$ as
\[
 U_n:= U_n(h)=\frac{1}{\binom{n}{m}} \sum_{1\leq i_1<i_2 <\ldots <i_m \leq n} h(Y_{i_1},\ldots, Y_{i_m}).
\]
U-statistics play an important role in nonparametric statistics, as many sample statistics can be expressed in this way, at least asymptotically. Well-known examples include the sample variance, Gini's mean difference, the Cram\'er-von~Mises test statistic and the $\chi^2$-test statistic for goodness of fit. For details and further examples see e.g. \citet{MR0595165} and \citet{MR2283250}. U-statistics have been introduced independently by \citet{MR0015746} and \citet{MR0026294}. \citet{MR0015746} showed that for i.i.d. data, $U_n(h)$ is an unbiased estimator of the parameter $\theta= \E{h\pr{Y_1,\ldots,Y_m}}$, and that it is minimum variance unbiased in nonparametric models. \citet{MR0026294} proved that, again for i.i.d.\ data and for square integrable kernels, the U-statistic is asymptotically normal.
More precisely,
\[
 \sqrt{n}\pr{U_n(h)-\E{h\pr{Y_1,\ldots,Y_m} }} \longrightarrow N(0,4\, \gamma_h^2)
\]
in distribution, where $\gamma_h^2:=\Var\pr{\E{h(Y_1,\ldots,Y_m)|Y_2,\ldots,Y_m} }$. In the so-called degenerate case $\gamma_h^2=0$, a different normalization is required to get a non-trivial limit, which will be a non-normal distribution; see \citet{MR0595165} for further details. 
Most of the results for i.i.d.\ data can be extended to weakly dependent stationary processes $\pr{Y_i}_{i\geq 1}$; see e.g. \cite{MR2283250} for a survey.

In this paper, we study the asymptotic distribution of certain U-statistics whose entries are local summary statistics  of an underlying weakly dependent process $\pr{X_t}_{t\in \Z}$. More precisely, we consider local statistics $g\Big(\frac{1}{\ell_n} \sum_{t\in B_{n,j}} X_t, \frac{1}{\ell_n} \sum_{t\in B_{n,j}} X_t^2, \ldots, \frac{1}{\ell_n} \sum_{t\in B_{n,j}} X_t^m\Big),$ $1\leq j \leq b_n$, which can be expressed as a function of the first $m$ empirical moments of the consecutive non-overlapping blocks 
\begin{equation}\label{eq:def_Bnj}
B_{n,j}:=\{ (j-1) \ell_n+1,\ldots ,j\ell_n \}
\end{equation}
  for $1\leq j\leq b_n$. We assume the block length $\ell_n$ to converge to infinity. Given an appropriate scaling factor $\sqrt{\ell_n}$ and certain regularity assumptions on $g\colon \R^m\to\R$, we will show that the statistics 
\begin{equation}\label{eq: Specific triang array}
 W_{n,j}:=\sqrt{\ell_n} g\Big(\frac{1}{\ell_n} \sum_{t\in B_{n,j}} X_t, \frac{1}{\ell_n} \sum_{t\in B_{n,j}} X_t^2, \ldots, \frac{1}{\ell_n} \sum_{t\in B_{n,j}} X_t^m\Big)
\end{equation}
 are each asymptotically normal. We are then interested in U-statistics of the type
\begin{equation}
\label{eq:U-stat}
U_n:=\frac 1{b_n\pr{b_n-1}}\sum_{1\leq j\neq k\leq 
b_n}h\pr{W_{n,j},W_{n,k}}.
\end{equation}

Such U-statistics arise naturally in nonparametric tests for the constancy of parameters of the underlying process $(X_t)_{t\in\Z}$. \citet*{MR4352537} test for the constancy of the variance by analysing Gini's mean 
difference of the logarithmic local sample 
variances, i.e. they choose $h(x,y)=\abs{x-y}$ and 
$W_{n,j}=\sqrt{\ell_n}\log(\frac{1}{\ell_n}
\sum_{t=(j-1)\ell_n+1}^{j\ell_n} (X_t- \frac{1}
{\ell_n} \sum_{r=(j-1)\ell_n+1}^{jl\ell_n} X_r)^2)$. \citet{Schmidt_2022} tests for changes in the mean by considering Gini's mean difference of the local sample means $W_{n,j}=\frac{1}{\sqrt{\ell_n}}\sum_{t\in B_{n,j}} X_t$.  In both works, the behaviour of the test statistic under the hypothesis is determined by deriving a central limit theorem for $U_n$. The setup considered in the present paper allows for testing for constancy of higher order characteristics of the distribution of $X_t$, such as the skewness or kurtosis; see Example~\ref{Example:applications} below.

Note that the entries of $U_n$ from \eqref{eq:U-stat} stem from 
a triangular array $\pr{W_{n,j}}_{1\leq j\leq b_n, n\geq 1}$  
and that under certain regularity assumptions made in this paper, they each converge to a normal law as $\ell_n\rightarrow \infty$. 
Moreover, assuming the number $b_n$ of blocks to converge to infinity as well, there additionally holds (under appropriate assumptions) a central limit theorem for the U-statistic itself. The limit distribution of $U_n$ is hence determined by the double asymptotics of the U-statistic and its entries.
It is the goal of this paper to investigate more systematically such structures and to find minimal conditions that guarantee asymptotic normality of the resulting U-statistics of type \eqref{eq:U-stat}.

\section{Main results} 
We are interested in U-statistics of triangular arrays of the form  
\eqref{eq:U-stat}, where $h\colon \R^2\to\R$ denotes a symmetric kernel function. We will henceforth always assume that
the kernel fulfils
\begin{equation} \label{eq:condition_on_h}
 \abs{h\pr{x,y}}\leq C\pr{1+\abs{x}+\abs{y}}
\end{equation}
for all $ x,y\in\R$ and some constant $C$. For the results under dependence later on, we will require the stronger assumption of Lipschitz-continuity. 

Our first result is a central limit theorem for $U_n$, given the triangular array $(W_{n,j})_{1\leq j\leq b_n, n\geq 1}$ is row-wise i.i.d.\ with a very mild assumption  on the distribution of the random variables $W_{n,j}$.  

\begin{theorem}\label{thm:TLC_triangular_array}
 Let $\pr{W_{n,j}}_{1\leq j\leq b_n,n\geq 1}$ be a row-wise 
 i.i.d.\ triangular 
array such that $b_n\to\infty$ as $n\to\infty$. 
Assume that  \eqref{eq:condition_on_h} holds, that $
 \gamma_n^2:=\cov{h\pr{W_{n,1},W_{n,2}} }{h\pr{W_{n,2},W_{n,3}}}>0$, 
 and that the sequence 
$\pr{W_{n,1}^2/\gamma_n^2}_{n\geq 1}$ is uniformly integrable. Then the 
following convergence in distribution holds
\begin{equation}\label{eq:TLC_triangular_array}
 \frac{\sqrt{b_n}}{\gamma_n}\pr{ 
 \frac{1}{b_n\pr{b_n-1}}\sum_{1\leq j \neq k\leq b_n}h\pr{W_{n,j},W_{n,k}}
 -\E{h\pr{W_{n,1},W_{n,2}}}
 }\to N\pr{0,4}.
\end{equation}

\end{theorem}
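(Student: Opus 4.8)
The plan is to use the Hoeffding decomposition of the U-statistic row by row, and to show that the linear (H\'ajek) part produces the $N\pr{0,4}$ limit while the degenerate part is asymptotically negligible on the relevant scale. For fixed $n$ set $\theta_n:=\E{h\pr{W_{n,1},W_{n,2}}}$ and introduce the first projection
\begin{equation*}
 h_1^{(n)}\pr{x}:=\E{h\pr{x,W_{n,2}}}-\theta_n,
\end{equation*}
together with the degenerate kernel $h_2^{(n)}\pr{x,y}:=h\pr{x,y}-h_1^{(n)}\pr{x}-h_1^{(n)}\pr{y}-\theta_n$. Conditioning on $W_{n,2}$ and using row-wise independence one checks that
\begin{equation*}
 \gamma_n^2=\cov{h\pr{W_{n,1},W_{n,2}}}{h\pr{W_{n,2},W_{n,3}}}=\var{h_1^{(n)}\pr{W_{n,1}}}=\E{\pr{h_1^{(n)}\pr{W_{n,1}}}^2},
\end{equation*}
so that $\gamma_n^2$ is precisely the variance of the projection. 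The Hoeffding decomposition then reads
\begin{equation*}
 U_n-\theta_n=\frac{2}{b_n}\sum_{j=1}^{b_n}h_1^{(n)}\pr{W_{n,j}}+R_n,\qquad R_n:=\frac{1}{b_n\pr{b_n-1}}\sum_{1\leq j\neq k\leq b_n}h_2^{(n)}\pr{W_{n,j},W_{n,k}}.
\end{equation*}

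Next I would treat the linear part. Set $Y_{n,j}:=\frac{2}{\sqrt{b_n}\,\gamma_n}h_1^{(n)}\pr{W_{n,j}}$; within each row these are i.i.d., centred, and $\var{\sum_{j=1}^{b_n}Y_{n,j}}=4$ by the identity above. Hence it suffices to verify the Lindeberg condition
\begin{equation*}
 \frac{4}{\gamma_n^2}\E{\pr{h_1^{(n)}\pr{W_{n,1}}}^2\1{\ens{\abs{h_1^{(n)}\pr{W_{n,1}}}>\varepsilon\gamma_n\sqrt{b_n}/2}}}\longrightarrow 0
\end{equation*}
for every $\varepsilon>0$, after which the Lindeberg--Feller central limit theorem for triangular arrays yields $\frac{\sqrt{b_n}}{\gamma_n}\cdot\frac{2}{b_n}\sum_{j=1}^{b_n}h_1^{(n)}\pr{W_{n,j}}\to N\pr{0,4}$. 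To check Lindeberg I would argue that the family $\pr{\pr{h_1^{(n)}\pr{W_{n,1}}}^2/\gamma_n^2}_{n\geq1}$ is uniformly integrable; since $\abs{h_1^{(n)}\pr{x}}\leq C\pr{1+\abs{x}+\E{\abs{W_{n,1}}}}+\abs{\theta_n}$ by \eqref{eq:condition_on_h}, this should follow from the assumed uniform integrability of $\pr{W_{n,1}^2/\gamma_n^2}_{n\geq1}$, together with the resulting $L^1$-bound $\E{W_{n,1}^2}\leq M\gamma_n^2$. Because $b_n\to\infty$, uniform integrability forces the truncation level to diverge and the displayed expectation to vanish.

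It then remains to show that the degenerate remainder is negligible, i.e.\ $\frac{\sqrt{b_n}}{\gamma_n}R_n\cprob 0$. Here I would pass to the second moment: as $h_2^{(n)}$ is completely degenerate the cross terms vanish and
\begin{equation*}
 \E{R_n^2}=\frac{2}{b_n\pr{b_n-1}}\E{\pr{h_2^{(n)}\pr{W_{n,1},W_{n,2}}}^2}\leq\frac{2}{b_n\pr{b_n-1}}\E{h\pr{W_{n,1},W_{n,2}}^2},
\end{equation*}
and \eqref{eq:condition_on_h} bounds the last expectation by a constant multiple of $1+\E{W_{n,1}^2}$, hence of $1+\gamma_n^2$. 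Multiplying by $b_n/\gamma_n^2$ shows that $\frac{\sqrt{b_n}}{\gamma_n}R_n$ has variance of order $\pr{b_n\gamma_n^2}^{-1}+b_n^{-1}$, so a Chebyshev argument gives the convergence in probability once one knows $b_n\gamma_n^2\to\infty$, and Slutsky's lemma then combines the two parts to finish the proof.

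The step I expect to be the main obstacle is the Lindeberg verification: one must transfer the uniform integrability hypothesis on $W_{n,1}^2/\gamma_n^2$ to the projection $h_1^{(n)}\pr{W_{n,1}}$ through the mere growth bound \eqref{eq:condition_on_h}, controlling the lower-order contributions (the additive constant and the terms $\E{\abs{W_{n,1}}}$ and $\abs{\theta_n}$) relative to $\gamma_n$. This is delicate precisely when $\gamma_n\to0$, and it is here that one should exploit the cancellation built into $h_1^{(n)}=\E{h\pr{\cdot,W_{n,2}}}-\theta_n$ rather than the crude triangle-inequality bound, so as to also secure $b_n\gamma_n^2\to\infty$ and thereby guarantee that the degenerate part is genuinely of smaller order.
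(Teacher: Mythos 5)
Your argument is essentially the paper's proof: the same Hoeffding decomposition, the same identity $\gamma_n^2=\var{h_1^{(n)}\pr{W_{n,1}}}$, the Lindeberg--Feller theorem for the linear part, and a second-moment bound for the degenerate part. The two points you leave open are closed in the paper as follows. For Lindeberg, the crude bound suffices: \eqref{eq:condition_on_h} gives $h_1^{(n)}\pr{x}^2\leq 18C^2\E{W_{n,1}^2}+2C^2x^2$ pointwise, and dividing by $\gamma_n^2$ shows that $\pr{h_1^{(n)}\pr{W_{n,1}}^2/\gamma_n^2}_{n\geq1}$ inherits uniform integrability from $\pr{W_{n,1}^2/\gamma_n^2}_{n\geq1}$ (whose uniform integrability also supplies $\sup_n\E{W_{n,1}^2}/\gamma_n^2<\infty$, controlling the deterministic summand); since the truncation level $\varepsilon\sqrt{b_n}$ diverges, the Lindeberg sums vanish. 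No cancellation beyond the centring, and no lower bound on $\gamma_n$, is invoked at this step.

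The genuine discrepancy is in the degenerate part. Your bound $\E{h_2^{(n)}\pr{W_{n,1},W_{n,2}}^2}\leq C\pr{1+\E{W_{n,1}^2}}$ leaves an additive constant, which after normalising by $\gamma_n^2$ forces you to assume $b_n\gamma_n^2\to\infty$; that is not among the hypotheses, so as written your argument does not prove the theorem when $\gamma_n\to0$. The paper instead asserts $\E{h_2^{(n)}\pr{W_{n,1},W_{n,2}}^2}\leq C'\E{W_{n,1}^2}$ with no constant term, so that the normalised second moment is at most $\frac{2C}{b_n}\sup_{n\geq1}\E{W_{n,1}^2/\gamma_n^2}\to 0$ using only $b_n\to\infty$ and the uniform integrability hypothesis. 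You should either establish a bound of that form or state explicitly that your proof requires the extra condition. (For what it is worth, the paper's inequality silently absorbs the $1$ coming from \eqref{eq:condition_on_h} into $\E{W_{n,1}^2}$, which is automatic only when $\E{W_{n,1}^2}$ stays bounded below or when $h$ is Lipschitz; so the concern you articulate in your last paragraph is a legitimate one about the regime $\gamma_n\to0$ itself, not merely about your write-up.)
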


A related result was obtained in \citet{MR4289844} for incomplete 
U-statistics of independent data.

The above theorem lays the groundwork for the more specific problems we investigate in this paper. 
As opposed to Theorem 1, the triangular array $(W_{n,j})_{1\leq j\leq b_n, n\geq 1} $  we consider from now on is in general not row-wise independent as the underlying process $\pr{X_t}_{t\in\Z}$ is weakly dependent. More specifically, we assume the stationary sequence $(X_t)_{t\in\Z}$ to be expressible as a functional of an i.i.d.~process. Thus, we can write $X_t:= 
f\pr{\pr{\varepsilon_{t-u}}_{u\in\Z}  }$, where $f\colon \R^\Z\to \R$ is measurable 
and $\pr{\varepsilon_u}_{u\in\Z}$ is i.i.d. In order to quantify the dependence, let 
$\pr{\varepsilon'_u}_{u\in\Z}$ be an independent copy of  $\pr{\varepsilon_u}_{u\in\Z}$ and 
define 
\begin{equation}\label{eq:definition_de_delta}
 \delta_{i}\pr{\pr{X_t}_{t\in\Z}}:=\norm{X_0-X_0^{*,i}}_2,
\end{equation}
where $X_0^{*,i}=f\pr{ \pr{\varepsilon^{*,i}_{-u  }}_{u\in\Z}}$ and $\varepsilon^{*,i}_v=\varepsilon'_i$ if 
$v=i$ 
and $\varepsilon^{*,i}_v=\varepsilon_v$ otherwise. We thus measure the contribution of 
$\varepsilon_i$ to $X_0$ by looking at the difference between $X_0$ and a coupled 
version $X_0^{*,i}$ for which $\varepsilon_i$ is replaced by an independent copy. This weak dependence concept was introduced by \citet{MR2172215} under the term \emph{physical dependence measure}  and is now frequently used in statistical applications (see, e.g., \citet{MR3225977}, \citet*{MR3114713}, \citet{MR2384990} and \citet{MR2812816}).

In the following, the triangular array  $(W_{n,j})_{1\leq j\leq b_n, n\geq 1} $  is assumed to be of the form \eqref{eq: Specific triang array}.
Example \ref{Example:applications} presents some problems that are covered by this structure. 
\begin{example}\label{Example:applications}
\begin{enumerate}
\item  \citet{MR4352537} propose a test for constancy of the variance based on the test statistic
\[
  U_n= \frac{1}{b_n(b_n-1)} \sum_{1\leq j,k\leq b_n} \sqrt{\ell_n}\big|\log s_{n,j}^2-\log s_{n,k}^2\big|,
\]
where $s_{n,j}^2:=\sum_{t\in B_{n,j}} (X_t- \frac{1}{\ell_n} \sum_{r\in B_{n,j}} X_r)^2 $.
In our setting, this corresponds to $m=2$, $g(x_1,x_2)=\log(x_2-x_1^2)$ and $h\pr{x,y}=\abs{x-y}$. 

\item  Considering higher moments, one can construct a test for the constancy of the skewness or the kurtosis in a similar fashion to (1) by considering Gini's mean difference (that is, $h\pr{x,y}=\abs{x-y}$) of the blockwise estimates $\hat{\gamma}_{n,j}$, $j=1, \ldots, b_n$, or $\hat{\kappa}_{n,j}$, $j=1, \ldots, b_n$, respectively. Note that an empirical version of the skewness is given by 
\begin{align*}
 \hat{\gamma}_{n,j}&=\frac{\frac{1}{\ell_n} \sum_{t\in B_{n,j}} \big(X_t-\frac{1}{\ell_n} \sum_{r\in B_{n,j}}X_r\big)^3}{\Big(\frac{1}{\ell_n}\sum_{t\in B_{n,j}} \big(X_t-\frac{1}{\ell_n} \sum_{r\in B_{n,j}}X_r\big)^2\Big)^{3/2} } \\
  & =\frac{\frac{1}{\ell_n}\sum_{t\in B_{n,j}} X_t^3 -3 \Big(\frac{1}{\ell_n}\sum_{t\in B_{n,j}} X_t^2 \Big) \Big(\frac{1}{\ell_n}\sum_{t\in B_{n,j}} X_t \Big) 
  +2 \Big( \frac{1}{\ell_n}\sum_{t\in B_{n,j}} X_t  \Big)^3}{ \Big(\frac{1}{\ell_n}\sum_{t\in B_{n,j}} X_t^2 - \big(\frac{1}{\ell_n}\sum_{t\in B_{n,j}} X_t \big)^2 \Big)^{3/2} },
\end{align*}
which is covered in our setting via the function

\[
 g(x_1,x_2,x_3)= \frac{x_3-3x_1 x_2 +2 x_1^3 }{ \big( x_2-x_1^2  \big)^{3/2}  }.
\]
An empirical version of the kurtosis is given by
\begin{align*}
\hat{\kappa}_{n,j}=& 
\frac{\frac{1}{\ell_n} \sum_{t\in B_{n,j}} \big(X_t-\frac{1}{\ell_n} \sum_{r\in B_{n,j}}X_r\big)^4}{\Big(\frac{1}{\ell_n}\sum_{t\in B_{n,j}} \big(X_t-\frac{1}{\ell_n} \sum_{r\in B_{n,j}}X_r\big)^2\Big)^{2} }\\
=&\bigg(\frac{1}{\ell_n}\sum_{t\in B_{n,j}} X_t^2-\Big(\frac{1}{\ell_n} \sum_{t\in B_{n,j}}X_t\Big)^2\bigg)^{-2}\cdot\bigg(\frac{1}{\ell_n} \sum_{t\in B_{n,j}} X_t^4 -4 \Big(\frac{1}{\ell_n} \sum_{t\in B_{n,j}} X_t   \Big)\Big(\frac{1}{\ell_n} \sum_{t\in B_{n,j}} X_t^3 \Big) \\
& \qquad \qquad \qquad \qquad  \qquad \qquad+6 \Big( \frac{1}{\ell_n} \sum_{t\in B_{n,j}} X_t \Big)^2 \Big( \frac{1}{\ell_n} \sum_{t\in B_{n,j}} X_t^2 \Big) -3 \Big(\frac{1}{\ell_n} \sum_{t\in B_{n,j}} X_t \Big)^4\bigg), 
\end{align*}
which corresponds to the function
\[
  g(x_1,x_2,x_3,x_4) = \frac{x_4-4 x_1 x_3 + 6 x_1^2 x_2 -3 x_1^4  }{(x_2-x_1^2)^2 }. 
\]
\end{enumerate}
\end{example}  

We now state a central limit theorem for U-statistics of this more concrete type of triangular array \eqref{eq: Specific triang array}. The question of the central limit theorem for U-statistics whose entries are Bernoulli shifts has been addressed in 
\citet{MR2060311} and \citet{MR4243516}, but these results do not treat the case of arrays.

\begin{theorem}\label{thm:Bernoulli}

 Assume that the following conditions are satisfied.
 \begin{enumerate}
 \item The function $h$ is Lipschitz-continuous.
 \item $\E{X_1^{2m}}<\infty$ and  
  \begin{equation}\label{eq:condition_on_projectors}
   \sum_{i\in\Z}\sum_{k=1}^m i^2\delta_i\pr{\pr{X_t^k}_{t\in\Z}} <\infty.
  \end{equation}
  \item The function $g$ satisfies
  $g\pr{v_0}=0$, where $v_0=\pr{\E{X_1^k}}_{k=1}^m\in\R^m$. There exists an
$a>0$ such that $g$ is differentiable at each point of $\prod_{k=1}^m
\pr{\E{X_1^k}-2a,\E{X_1^k}+2a}$ and the gradient of $g$ is bounded on $\prod_{k=1}^m
\pr{\E{X_1^k}-2a,\E{X_1^k}+2a}$.  
\item\label{itm:cond_sur_bn_ln_Bernoulli} The sequences $\pr{b_n}_{n\geq 1}$ 
and 
  $\pr{\ell_n}_{n\geq 1}$ go to infinity as $n$ goes 
  to infinity. Moreover, $\lim_{n\to +\infty}
  b_n/\ell_n=0$.
 \end{enumerate}
Let $\eta\colon\R^m\to [0,1]$ be a smooth function with $\eta\pr{x}=1$ 
if $\norm{x-v_0}_2\leq a$
and $\eta\pr{x}=0$ if 
 $\norm{x-v_0}_2>2a$, and define
\begin{equation*}
  W^{\pr{\eta}}_{n,j}:= \sqrt{\ell_n}\pr{g\cdot\eta}\pr{
  \pr{
  \frac 1{\ell_n}\sum_{t\in 
B_{n,j}}X_t^k}_{k=1}^m  }.
 \end{equation*} 
If 
\begin{equation}\label{eq:definition_de_sigma_Bernoulli}
 \sigma^2:=\sum_{t\in\Z}\cov{\sum_{k=1}^m\frac{\partial g}{\partial x_k}\pr{v_0}X_0^k}{\sum_{k=1}^m\frac{\partial g}{\partial x_k}\pr{v_0}X_t^k  }>0,
\end{equation}
 the following convergence in distribution holds
\begin{equation}\label{eq:conv_U_stat_Bernoulli}
 \sqrt{b_n}\pr{U_n
 -\frac 1{b_n\pr{b_n-1}}\sum_{1\leq j\neq k\leq
b_n}\E{h\pr{W^{\pr{\eta}}_{n,j},W^{\pr{\eta}}_{n,k}}
 }    }\to N\pr{0,4\gamma^2},
\end{equation}
where 
 \begin{equation*}
  \gamma^2:= \cov{h\pr{\sigma N,\sigma N'}}{h\pr{\sigma N',\sigma N''}}
 \end{equation*}
for independent standard normally distributed random variables $N$, $N'$ and $N''$.

\end{theorem}
\begin{remark}
Note that the partial derivatives in \eqref{eq:definition_de_sigma_Bernoulli} arise as a consequence of the 
delta method.
\end{remark}

In the examples below, we provide some classes of processes $(X_t)_{t\in \Z}$ that fulfil the above condition \eqref{eq:condition_on_projectors}. For more details, we refer to Section \ref{subsec:Verification examples}. 
\begin{example} \label{Example: Hoelder of linear process}
 Let $X_t$ be a H\"{o}lder continuous function of a linear process as considered, for instance, in  \citet{ MR1487431} and \citet{MR2212691}.
More precisely, define
$X_t$ as 
\begin{equation*}
 X_t:=\varphi\pr{\sum_{j\in\Z}a_j\varepsilon_{t-j}},
\end{equation*}
where $\varphi\colon\R\to\R$ is $\gamma$-H\"{o}lder continuous for some $\gamma\in 
(0,1]$, $\pr{a_j}_{j\in \Z}$ is a sequence of real numbers such that 
$\sum_{j\in\Z}j^2\abs{a_j}^{\gamma}<\infty$ and $\pr{\varepsilon_u}_{u\in\Z}$ is an 
i.i.d.\ sequence such that $\E{\abs{\varepsilon_0}^{2m\gamma}}<\infty$. Then 
$\pr{X_t}_{t\in\Z}$ satisfies condition \eqref{eq:condition_on_projectors}. \end{example}

\begin{example}\label{Example: Functions of Gaussian proc} 
 Assume $X_t$ can be written as a function of a Gaussian linear process: Let $X_t=\varphi(Y_t)$ with
 \begin{equation*}
Y_{t}:=\sum_{ j \in\Z  }a_{j}\varepsilon_{t-j}, 
\end{equation*}
 where $\varphi:\R\rightarrow\R$, 
 $\pr{\varepsilon_{u}}_{u \in \Z}$ is an i.i.d.\ sequence, 
$\varepsilon_{0}$ has 
a standard normal distribution and $a_j \in \R$ for all $j\in\Z$ with $\sum_{ j \in\Z  
}\abs{a_{j}}<\infty$ as well as $\sum_{ j \in\Z  
}a_{j}^2=1$.  Such processes were considered, e.g., in \citet{MR2498953}.
Given  that $\varphi\pr{Y_0}\in\mathbb L^2$ 
and 
$\E{\varphi\pr{Y_{0}}}=0$, the following expansion holds: 
\begin{equation*}
\varphi\pr{Y_{t}}=\sum_{q=1}^{\infty}c_q\pr{\varphi}H_q\pr{Y_{t}},
\end{equation*}
where the
$q$-th Hermite polynomial is defined by 
\begin{equation*}
H_q\pr{x}:=\pr{-1}^q\exp\pr{\frac{x^2}2}\frac{d^q}{dx^q}\exp\pr{-\frac{x^2}2}
\end{equation*}
and
\begin{equation*}
c_q\pr{\varphi}:=\frac{1}{q!}\E{\varphi\pr{Y_{0}}H_q\pr{Y_{0}}},
\end{equation*}
provided that $\sum_{q=1}^{\infty}q!c_q\pr{\varphi}^2$ converges. 
 Then condition 
\eqref{eq:condition_on_projectors} is met if $\sum_{j\in\Z}j^2\abs{a_j}<\infty$ and

\begin{equation*}
\sum_{k=1}^m\sum_{q=1}^{\infty}\sqrt{q\cdot q!} 
\abs{c_q\pr{\varphi^k-\E{\varphi^k\pr{Y_0}}}}<\infty.
\end{equation*}

\end{example}

\begin{example} \label{Example: Volterra}
Let $\pr{X_t}_{t\in \Z}$ be a Volterra process, i.e. let
\begin{equation*}
 X_t:=\sum_{j,j'\in\Z,j\neq j'}a_{j,j'}\varepsilon_{t-j}\varepsilon_{t-j'},
\end{equation*}
where $\pr{\varepsilon_u}_{u\in\Z}$ is i.i.d.\ centred, $\E{\varepsilon_0^2}<\infty$ 
and $\sum_{j,j'\in\Z,j\neq j'}a_{j,j'}^2<\infty$. Such processes and extensions to Volterra series are considered in \citet{MR713994} and
 \citet{MR991969}.
If $\E{\varepsilon_0^{2m}}<\infty$ as well as $$\sum_{j\in\Z} j^2\sqrt{\sum_{j'\in\Z, j'\neq j} \pr{a_{j,j'}^2+a_{j',j}^2}}  <\infty,$$
then $\pr{X_t}_{t\in\Z}$ satisfies condition
\eqref{eq:condition_on_projectors}.
\end{example}

It would be more natural to centre $U_n$ in \eqref{eq:conv_U_stat_Bernoulli} by $\sum_{1\leq j\neq k\leq b_n}\E{ h\pr{W_{n,j},W_{n,k}}} $ rather than by the truncated version $\sum_{1\leq j\neq k\leq b_n}\E{ h\pr{W_{n,j}^{\pr{\eta}},W_{n,k}^{\pr{\eta}}}}$. However, the conditions of Theorem \ref{thm:Bernoulli} do not guarantee that  $\E{ \abs{h\pr{W_{n,j},W_{n,k}}}}$ exists, as the following example shows.
\begin{example}
Consider the case $m=2$, $g\pr{x_1,x_2}=\log x_2 \1{x_2>0}$ and 
$h\pr{x,y}=\abs{x-y}$ with i.i.d.\ observations $(X_t)_{t\in\Z}$. 
Since $W_{n,1}$ and $W_{n,2}$ are  consequently likewise independent and identically distributed, finiteness of 
$\E{\abs{W_{n,1}-W_{n,2}  }}$ is 
equivalent to the finiteness of $\E{\abs{W_{n,1} }}$.
 Now, it suffices to find an i.i.d.\ sequence $\pr{X_t}_{t\in \Z}$ such that $\E{X_1^4}<\infty$ and 
\begin{equation*}
 \E{\abs{\log\pr{\frac{1}{\ell_n}\sum_{t=1}^{\ell_n} X_t^2}  }}=\infty.
\end{equation*}
By choosing the distribution of $X_1$ as
\begin{equation*}
 \PP\pr{X_1^2=\exp\pr{-\exp\pr{\exp\pr{k}}} }=2^{-k} 
\end{equation*}
for $k\geq 1$, it follows
\begin{align*}
\abs{\log\pr{\frac{1}{\ell_n}\sum_{t=1}^{\ell_n} X_t^2}  }
 \geq & \sum_{k\geq 1} 
 \abs{\log\pr{\frac{1}{\ell_n}\sum_{t=1}^{\ell_n}\exp\pr{-\exp\pr{\exp\pr{k}}}}  }
 \1{\bigcap_{t=1}^{\ell_n} \ens{X_t^2=\exp\pr{-\exp\pr{\exp\pr{k}}}} }\\
 =& \sum_{k\geq 1}\exp\pr{\exp{k}} \1{\bigcap_{t=1}^{\ell_n} 
\ens{X_t^2=\exp\pr{-\exp\pr{\exp\pr{k}}}} }.
\end{align*}
Taking expectations and using independence leads to 
\begin{equation*}
 \E{\abs{\log\pr{\frac{1}{\ell_n}\sum_{t=1}^{\ell_n} X_t^2}  } }
 \geq \sum_{k\geq 1}\exp\pr{\exp (k)}2^{-k\ell_n}=\infty.
\end{equation*}
\end{example}

By imposing additional assumptions on the function $g$,  the dependence coefficients as well as the sequences
$\pr{b_n}_{n\geq 1}$ and $\pr{\ell_n}_{n\geq 1}$, we are able to replace the centring term in \eqref{eq:conv_U_stat_Bernoulli} computed from the $W_{n,j}^{(\eta)}$'s by an expression that does not require truncation.  
 
\begin{proposition} \label{Prop: Replacing centring Bernoulli}
Assume that the assumptions of Theorem \ref{thm:Bernoulli} hold. 
If additionally all the second order partial derivatives of $g$ at $v_0$ exist and if there exists a $\kappa\in\pr{0,1}$ such that $\sum_{i\in\Z}\sum_{k=1}^m \abs{i}^{\frac 12+\frac{1}{2\kappa}}\delta_i\pr{\pr{X_t^k}_{t\in\Z}} <\infty$ and $b_n/\ell_n^{1-\kappa}\rightarrow 0$, then the following convergence in distribution holds
\begin{equation*}
 \sqrt{b_n}\pr{U_n
 - \E{h\pr{ Z_n,Z'_n}
 }    }\to N\pr{0,4\gamma^2},
\end{equation*}
where
\begin{equation*}
 Z_n:=\frac{1}{\sqrt{\ell_n}}\sum_{k=1}^m\frac{\partial g}{\partial x_k}\pr{v_0}\sum_{t=1}^{\ell_n}\pr{X_t^k-\E{X_1^k}}
 \end{equation*}
and $Z'_n$ is an independent copy of $Z_n$.
 \end{proposition}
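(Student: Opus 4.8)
The plan is to deduce the statement from Theorem \ref{thm:Bernoulli} by a Slutsky argument. That theorem already gives $\sqrt{b_n}\pr{U_n-C_n}\to N\pr{0,4\gamma^2}$ with centring $C_n:=\frac{1}{b_n\pr{b_n-1}}\sum_{1\le j\ne k\le b_n}\E{h\pr{W^{(\eta)}_{n,j},W^{(\eta)}_{n,k}}}$, so it suffices to prove that the deterministic quantity $\sqrt{b_n}\pr{C_n-\E{h\pr{Z_n,Z'_n}}}$ tends to $0$; note that $\E{h\pr{Z_n,Z'_n}}$ is finite because $Z_n\in\mathbb L^2$ (a consequence of $\E{X_1^{2m}}<\infty$) and $h$ has the linear growth \eqref{eq:condition_on_h}. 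Introducing the block-wise linear statistics $Z_{n,j}:=\frac{1}{\sqrt{\ell_n}}\sum_{k=1}^m\frac{\partial g}{\partial x_k}\pr{v_0}\sum_{t\in B_{n,j}}\pr{X_t^k-\E{X_1^k}}$, which by stationarity all have the law of $Z_n$, I split the target into a linearisation error and a decoupling error,
\[
\sqrt{b_n}\pr{C_n-\E{h\pr{Z_n,Z'_n}}}
=\underbrace{\frac{\sqrt{b_n}}{b_n\pr{b_n-1}}\sum_{1\le j\ne k\le b_n}\pr{\E{h\pr{W^{(\eta)}_{n,j},W^{(\eta)}_{n,k}}}-\E{h\pr{Z_{n,j},Z_{n,k}}}}}_{(\mathrm{I})}
+\underbrace{\frac{\sqrt{b_n}}{b_n\pr{b_n-1}}\sum_{1\le j\ne k\le b_n}\pr{\E{h\pr{Z_{n,j},Z_{n,k}}}-\E{h\pr{Z_n,Z'_n}}}}_{(\mathrm{II})},
\]
and show that each term vanishes.

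For term $(\mathrm{I})$, the Lipschitz continuity of $h$ gives $\abs{(\mathrm{I})}\le 2L\sqrt{b_n}\,\E{\abs{W^{(\eta)}_{n,1}-Z_{n,1}}}$ after using stationarity, where $L$ is the Lipschitz constant. Set $D_n:=\pr{\frac{1}{\ell_n}\sum_{t\in B_{n,1}}X_t^k-\E{X_1^k}}_{k=1}^m$. On $\ens{\norm{D_n}_2\le a}$ the cut-off is inactive, so $W^{(\eta)}_{n,1}-Z_{n,1}=\sqrt{\ell_n}\,R\pr{D_n}$ with $R\pr{D}:=g\pr{v_0+D}-g\pr{v_0}-\sum_{k}\frac{\partial g}{\partial x_k}\pr{v_0}D_k$ the first order Taylor remainder; the new hypothesis that the second order partial derivatives of $g$ exist at $v_0$ yields $R\pr{D}=O\pr{\norm{D}_2^2}$ as $D\to 0$. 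Since the covariances of $\pr{X_t^k}_{t\in\Z}$ are summable by \eqref{eq:condition_on_projectors}, one has $\E{\norm{D_n}_2^2}=O\pr{1/\ell_n}$, so the contribution of a small interior ball is of order $\ell_n^{-1/2}$. Because the second order control is only pointwise at $v_0$, the remainder is \emph{not} bounded by $C\norm{D}_2^2$ uniformly on $\ens{\norm{D}_2\le a}$; I would therefore split this event at a scale $\norm{D_n}_2=\ell_n^{-\theta}$ and estimate the intermediate shell $\ens{\ell_n^{-\theta}<\norm{D_n}_2\le a}$ as well as the truncation region $\ens{\norm{D_n}_2>a}$ — where only the bounded gradient of $g$ is available — through a tail bound for the empirical moment deviations $D_n$. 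Optimising $\theta$ produces $\E{\abs{W^{(\eta)}_{n,1}-Z_{n,1}}}$ of order $\ell_n^{-\pr{1-\kappa}/2}$, and the growth restriction $b_n/\ell_n^{1-\kappa}\to 0$ then forces $(\mathrm{I})\to 0$.

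For term $(\mathrm{II})$, stationarity implies that $\rho_n\pr{d}:=\E{h\pr{Z_{n,1},Z_{n,1+d}}}$ depends only on the lag $d=\abs{j-k}$, while $\E{h\pr{Z_n,Z'_n}}=:\rho_n^{\mathrm{ind}}$ is its independent counterpart; collecting lags bounds $\abs{(\mathrm{II})}$ by $b_n^{-1/2}\sum_{d=1}^{b_n-1}\abs{\rho_n\pr{d}-\rho_n^{\mathrm{ind}}}$. I would estimate each difference by coupling: fixing a cut point in the gap separating $B_{n,1}$ and $B_{n,1+d}$, I replace in $Z_{n,1+d}$ the innovations lying beyond the cut on the side of $B_{n,1}$ by an independent copy and in $Z_{n,1}$ those beyond the cut on the side of $B_{n,1+d}$ by a further independent copy, so that the two variables depend on disjoint independent families of innovations and are therefore independent with the correct marginal laws. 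The Lipschitz bound then controls $\abs{\rho_n\pr{d}-\rho_n^{\mathrm{ind}}}$ by the associated $\mathbb L^2$ coupling error, which for $d\ge 2$ is of order $\ell_n^{1/2}\sum_{k=1}^m\sum_{i>r_d}\delta_i\pr{\pr{X_t^k}_{t\in\Z}}$, with $r_d$ of order $d\ell_n$ the distance to the cut. The weighted summability $\sum_{i\in\Z}\sum_{k=1}^m\abs{i}^{1/2+1/\pr{2\kappa}}\delta_i\pr{\pr{X_t^k}_{t\in\Z}}<\infty$, whose exponent exceeds $1$, makes $\sum_{d\ge 2}\abs{\rho_n\pr{d}-\rho_n^{\mathrm{ind}}}$ of order $\ell_n^{-1/\pr{2\kappa}}$, while the adjacent blocks $d=1$ contribute only a boundary term of order $\ell_n^{-1/2}$; both are negligible against $\sqrt{b_n}$, so $(\mathrm{II})\to 0$.

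I expect the main obstacle to be term $(\mathrm{I})$. Because $\E{X_1^{2m}}<\infty$ is the only moment assumption, the deviations of the highest empirical moment $\frac{1}{\ell_n}\sum_{t\in B_{n,1}}X_t^m$ cannot be controlled beyond the second order, so the sharp tail estimate for $D_n$ entering the shell bound must be squeezed out of the second moment together with the weighted dependence summability; and since the second order Taylor remainder is only controlled pointwise at $v_0$, the split scale $\ell_n^{-\theta}$, the dependence decay and the block growth $b_n/\ell_n^{1-\kappa}\to 0$ all have to be balanced so that the remainder, amplified by the factor $\sqrt{\ell_n b_n}$, still vanishes. This is precisely the point at which the two additional hypotheses of the proposition, absent from Theorem \ref{thm:Bernoulli}, become indispensable.
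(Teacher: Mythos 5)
Your overall decomposition is exactly the paper's: reduce to the deterministic statement $\sqrt{b_n}\pr{C_n-\E{h\pr{Z_n,Z'_n}}}\to 0$ and split it into the linearisation error $(\mathrm{I})$ (replacing $\E{h(W^{(\eta)}_{n,j},W^{(\eta)}_{n,k})}$ by $\E{h(Z_{n,j},Z_{n,k})}$) and the decoupling error $(\mathrm{II})$. Where you diverge is in the execution of both steps, and your assessment of where the difficulty lies is inverted. For $(\mathrm{I})$, your claim that the remainder $R(D):=(g\cdot\eta)(v_0+D)-\sum_k\frac{\partial g}{\partial x_k}(v_0)D_k$ is \emph{not} bounded by $C\norm{D}_2^2$ uniformly is wrong: twice differentiability at $v_0$ gives $\abs{R(D)}\le C\norm{D}_2^2$ on some fixed ball $\norm{D}_2\le r$, and for $\norm{D}_2>r$ the global Lipschitz bound on $g\cdot\eta$ together with $(g\cdot\eta)(v_0)=0$ gives $\abs{R(D)}\le C\norm{D}_2\le (C/r)\norm{D}_2^2$, so the quadratic bound holds globally. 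This is the paper's route: it yields $\sqrt{b_n}\,\E{\abs{W^{(\eta)}_{n,1}-Z_{n,1}}}\le C\sqrt{b_n\ell_n}\,\E{\norm{D_n}_2^2}=O(\sqrt{b_n/\ell_n})$, so $(\mathrm{I})$ needs only $b_n/\ell_n\to 0$ and is the easy step, not the main obstacle. Your shell decomposition at scale $\ell_n^{-\theta}$ would still close (with $\theta=\kappa/2$ it spends the hypothesis $b_n/\ell_n^{1-\kappa}\to 0$), but it is an unnecessary detour born of that misconception.

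For $(\mathrm{II})$ your route is genuinely different from the paper's and arguably sharper. The paper manufactures exactly independent surrogates $A'_{n,j}$ by conditioning $X_t^k$ on a window of radius $\ell_n^{\kappa}$ and shortening each block by $\ell_n^{\kappa}$; it then bounds the error uniformly over all $b_n(b_n-1)$ pairs, which is precisely where the weighted summability $\sum_i\abs{i}^{1/2+1/(2\kappa)}\delta_i<\infty$ and the condition $b_n/\ell_n^{1-\kappa}\to 0$ are consumed. Your per-lag coupling instead exploits that only $O(b_n)$ pairs are adjacent and that the decoupling error decays with the gap; carried out carefully (using the standard replacement estimate $\norm{X_t-\tilde X_t}_2\le\sum_{u\in S}\delta_{u-t}\pr{\pr{X_s}_{s\in\Z}}$ for substitution of a whole set $S$ of innovations, proved by telescoping as in Lemma~\ref{lem:norm_approx_partial_sums}), the lag-$d$ error is $O\big(\ell_n^{-1/2}\sum_{\abs{i}>r_d}\abs{i}\,\delta_i\big)$, which sums over $d\ge 2$ to $O(\ell_n^{-3/2})$ already under \eqref{eq:condition_on_projectors}. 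Both executions are valid; the paper's is shorter on $(\mathrm{I})$, yours is more economical on $(\mathrm{II})$.
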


\begin{remark}
The above proposition introduces a centring term which is easier to handle than the original one in Theorem~\ref{thm:Bernoulli}. However, if we want to use $U_n$ as a test statistic for testing for a constant value of the parameter $g\pr{\E{X_t},\E{X_t^2},\ldots,\E{X_t^m}}$, we need to be able to explicitly calculate the centring term. This is achieved by the following corollary, where we show that, for the important example when $h(x,y)=\abs{x-y}$, we can replace $\E{\abs{Z_n-Z_n^\prime}}$ by $\sigma\, \E{\abs{ N - N^\prime}} = 2\,\sigma/\sqrt{\pi}$. The remaining parameter $\sigma$ can be estimated by standard procedures for estimating long-run variances.
\end{remark}
\begin{corollary}\label{cor: replacing centring term bernoulli}
Suppose that the time series $\pr{X_t}_{t\in\Z}$ can be written as a one-sided Bernoulli shift, that is, $X_t= f\pr{\pr{\varepsilon_{t-u}}_{u\geq 0}  }$. We assume moreover that there exists a $2<p\leq 3$ such that
\begin{enumerate}
\item  $\E{\abs{X_t}^{p\cdot m}}<\infty$ and
\begin{equation}
\sum_{i\geq 0}\sum_{k=1}^m \pr{i^2\delta_{i,p}\pr{(X_t^k)_{t\in\Z}}+i^{5/2}\delta_{i,2}\pr{(X_t^k)_{t\in\Z}} }<\infty,
\end{equation} where $
\delta_{i,p}((X_t)_{t\in\Z}):=\norm{X_0-X_0^{*,i}}_p$, and 
\item $ \sqrt{b_n}\ell_n^{1-p/2} \pr{\log\ell_n}^{p/2}+ \sqrt{b_n/\ell_n}\to 0$.
\end{enumerate}
Denote
\begin{equation*}
U_n:=\frac{1}{b_n(b_n-1)} \sum_{1\leq j \neq k\leq b_n} \abs{W_{n,j} -W_{n,k}}, 
\end{equation*} 
where $W_{n,j}$ is defined as in \eqref{eq: Specific triang array}.
Then the following convergence in distribution holds
\[
  \sqrt{b_n} \big(U_n- \frac{2\sigma}{\sqrt{\pi}}\big) \longrightarrow N(0,4\gamma^2),
\]
where $\sigma$ and $\gamma^2$ are defined as in Theorem~\ref{thm:Bernoulli}.
\end{corollary}

\section{Proofs}\label{sec:proofs}
\subsection{Proof of Theorem \ref{thm:TLC_triangular_array}}

 We use the Hoeffding decomposition of the kernel function $h$ and define 
 \begin{align*}  
 \theta_n&:=\E{h\pr{W_{n,1},W_{n,2}}},\\
 h_{1,n}(x)&:=\E{h(x,W_{n,2})}-\theta_n,\\
h_{2,n}\pr{x,y}&:=h\pr{x,y}-h_{1,n}\pr{x}-h_{1,n}\pr{y}-\theta_n.
 \end{align*}
 At  the level of the U-statistic, we then obtain
\begin{align*}
U(n)-\E{U(n)}
=&\frac{1}{b_n\pr{b_n-1}}\sum_{1\leq j \neq k\leq b_n}h\pr{W_{n,j},W_{n,k}}
 -\theta_n \\
=&\frac{2}{b_n}\sum_{j=1}^{b_n}h_{1,n}\pr{W_{n,j}}+\frac{1}{b_n\pr{b_n-1}}\sum_{
1\leq j\neq k\leq b_n}h_{2,n}\pr{W_{n,j},W_{n,k}}.
\end{align*}
In the following, we will prove the convergence in distribution 
\begin{equation}\label{eq:TLC_triangular_array_linear_part}
 \frac{2}{\sqrt{b_n} 
\gamma_n}\sum_{j=1}^{b_n}h_{1,n}\pr{W_{n,j}} \to 
N\pr{0,4},
\end{equation}
and the convergence in probability
\begin{equation}\label{eq:TLC_triangular_array_degenerated_part}
 \frac{1}{b_n^{3/2}\gamma_n}\sum_{
1\leq j\neq k\leq b_n}h_{2,n}\pr{W_{n,j},W_{n,k}} \to   0, 
\end{equation}
where $\gamma_n^2=\cov{h\pr{W_{n,1},W_{n,2}} }{h\pr{W_{n,2},W_{n,3}}}$. 
The assertion then follows by an application of Slutzky's Lemma.

Starting with \eqref{eq:TLC_triangular_array_linear_part}, we will  apply Lindeberg's 
central limit theorem to the triangular array 
$Y_{n,j}:=h_{1,n}\pr{W_{n,j}}/\pr{\gamma_n \sqrt{b_n}}.$
Note that by construction, the $Y_{n,j}$'s are identically distributed with $\E{Y_{n,1}}=0$. Moreover, it holds $\var{Y_{n,1}}=1/b_n$ since by independence
\begin{align*}
\var{h_{1,n}\pr{W_{n,1}}}
&=\var{\E{h\pr{W_{n,1},W_{n,2}  } \mid W_{n,1} }}\\
  & =\E{\E{h\pr{W_{n,1},W_{n,2}  } \mid W_{n,1} }^2}- \pr{ \E{ \E{h\pr{W_{n,1},W_{n,2}  } \mid W_{n,1}  }  }}^2\\
  & =\int_{\R}\E{h\pr{x,W_{n,2}}h\pr{x,W_{n,3}} }d\PP_{W_{n,1}}\pr{x}- \pr{ \E{ h\pr{W_{n,1},W_{n,2}  }}  }^2\\
& =\E{h\pr{W_{n,1},W_{n,2}  } \cdot h\pr{W_{n,1},W_{n,3}  }}- \pr{ \E{ h\pr{W_{n,1},W_{n,2}  } }}^2\\
 &= \cov{h\pr{W_{n,1},W_{n,2}} }{h\pr{W_{n,2},W_{n,3}}}= \gamma_n^2.
\end{align*}
It thus remains to verify the Lindeberg 
condition, that is, to show that for all $\varepsilon>0$, 
\begin{equation*}
 \sum_{j=1}^{b_n}\E{Y_{n,j}^2\mathbf{1}_{\ens{\abs{Y_{n,j}}>\varepsilon  }}}\to 0.
\end{equation*}
Since the random variables $Y_{n,j}, 1\leq j\leq b_n$, are identically 
distributed, Lindeberg's condition reduces to 
\begin{align*}
 \sum_{j=1}^{b_n}\E{Y_{n,j}^2\mathbf{1}_{\ens{\abs{Y_{n,j}}>\varepsilon  }}}=
 b_n \E{Y_{n,1}^2\mathbf{1}_{\ens{\abs{Y_{n,1}}>\varepsilon  }}}
 =\E{\frac{h^2_{1,n}\pr{W_{n,1}  }  
}{\gamma_n^2}\1{\ens{\frac{\abs{h_{1,n}\pr{W_{n,1}  }  }     }{\gamma_n}  
>\varepsilon  \sqrt{b_n}    }} } \rightarrow 0.
\end{align*}
Observe that by property \eqref{eq:condition_on_h} of the kernel function $h$, we have  $\abs{h_{1,n}\pr{x}}
\leq 3C\E{\abs{W_{n,1}}}+C\abs{x}$ and hence $h^2_{1,n}\pr{x}\leq 
18C^2\E{ W^2_{n,1}}+2C^2x^2$, from which it follows that
\begin{equation*}
 \frac{h^2_{1,n}\pr{W_{n,1}  }  
}{\gamma_n^2}\leq18C^2+2C^2\frac{W_{n,1}^2}{\gamma_n^2}.
\end{equation*}
Consequently, the uniform integrability of the  sequence 
$\pr{h^2_{1,n}\pr{W_{n,1}  }  
/\gamma_n^2}_{n\geq 1}$ follows from that of the sequence   
$\pr{W_{n,1}^2/\gamma_n^2}_{n\geq 1}$, and thus Lindeberg's
condition is met.

It remains to verify \eqref{eq:TLC_triangular_array_degenerated_part}. Since 
$\E{h_{2,n}\pr{W_{n,j},W_{n,k}  }h_{2,n}\pr{W_{n,j'},W_{n,k'}  }  }=0$ if 
$\{j,k\}\neq \{j',k'\}$, we obtain 
\begin{equation}\label{eq:moment_ordre_deux_partie_deg}
 \E{\pr{ \frac{1}{b_n^{3/2}\gamma_n}\sum_{
1\leq j\neq k\leq b_n}h_{2,n}\pr{W_{n,j},W_{n,k}}  }^2}
=\frac{2\pr{b_n-1}}{\gamma_n^2b_n^2}\E{h^2_{2,n}\pr{W_{n,1},W_{n,2}}  }.
\end{equation}
By the properties of the kernel function $h$ and the definition of $h_{2,n}$,
there exists a constant $C$ independent of $n$, $x$ and $y$ such that 
$\abs{h_{2,n}\pr{x,y}}\leq C\pr{1+\abs{x}+\abs{y}+\E{\abs{W_{n,1}}}}$ and hence 
$\E{h^2_{2,n}\pr{W_{n,1},W_{n,2}}  }\leq C'\E{W_{n,1}^2}$ for a constant $C'$ 
independent of $n$. Combining this with 
\eqref{eq:moment_ordre_deux_partie_deg}, we get that 
\begin{equation*}
 \E{\pr{ \frac{1}{b_n^{3/2}\gamma_n}\sum_{
1\leq j\neq k\leq b_n}h_{2,n}\pr{W_{n,j},W_{n,k}}  }^2}
\leq \frac{2C}{b_n}\sup_{n\geq 1}\E{\frac{W_{n,1}^2}{\gamma_n^2}}
\end{equation*}
and the uniform integrability of $\pr{W_{n,1}^2/\gamma_n^2}_{n\geq 1}$ guarantees the
finiteness of the above supremum. This concludes the proof of 
\eqref{eq:TLC_triangular_array_degenerated_part} and that of 
Theorem~\ref{thm:TLC_triangular_array}.
\subsection{Sketch of proof for Theorem~\ref{thm:Bernoulli}} 
This section outlines the proof ideas for Theorem~\ref{thm:Bernoulli}, while all details can be found in the next section. 
First, we reduce the problem to 
 the case where in $U_n$, the term $W_{n,j}$ is 
 replaced by $W^{\pr{\eta}}_{n,j}=\sqrt{\ell_n}\pr{g\cdot\eta}\big(  \big(  \frac 1{\ell_n}\sum_{t\in B_{n,j}}X_t^k\big)_{k=1}^m \big)$. 
We thus define 
\begin{equation*}
U^{\pr{\eta}}_n:=\frac 1{b_n\pr{b_n-1}}\sum_{1\leq j\neq k\leq 
b_n}h\pr{W^{\pr{\eta}}_{n,j},W^{\pr{\eta}}_{n,k}}.
\end{equation*} 
The next lemma shows that we can replace $U_n$ in the central limit theorem by $U^{\pr{\eta}}_n$.

 \begin{lemma}\label{Lemma: Approx U(a) Bernoulli}
Let the assumptions of Theorem~\ref{thm:Bernoulli} hold. Then $\PP\pr{U_n\neq U^{\pr{\eta}}_n}\to 0$. In particular, 
$ \pr{\sqrt{b_n}\pr{U_n-U_n^{\pr{\eta}}}}_{n\geq 1}$ converges in 
probability to zero.
 \end{lemma}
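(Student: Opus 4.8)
The plan is to exploit that $g$ and $g\cdot\eta$ coincide on the ball $\ens{x:\norm{x-v_0}_2\leq a}$, where $\eta\equiv 1$. Writing $M_{n,j}:=\pr{\frac 1{\ell_n}\sum_{t\in B_{n,j}}X_t^k}_{k=1}^m$ for the empirical moment vector of the $j$-th block, on the event $\ens{\norm{M_{n,j}-v_0}_2\leq a}$ one has $\pr{g\cdot\eta}\pr{M_{n,j}}=g\pr{M_{n,j}}$ and hence $W^{\pr{\eta}}_{n,j}=W_{n,j}$. Therefore the two U-statistics can differ only if some block produces an atypical moment vector, which yields the inclusion
\[
 \ens{U_n\neq U^{\pr{\eta}}_n}\subseteq\bigcup_{j=1}^{b_n}\ens{\norm{M_{n,j}-v_0}_2>a}.
\]

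First I would pass to the right-hand side via a union bound and use stationarity of $\pr{X_t}_{t\in\Z}$ (so that all $M_{n,j}$ share the law of $M_{n,1}$), reducing the problem to a single block:
\[
 \PP\pr{U_n\neq U^{\pr{\eta}}_n}\leq b_n\,\PP\pr{\norm{M_{n,1}-v_0}_2>a}.
\]
Splitting the Euclidean norm coordinatewise and applying Chebyshev's inequality, the probability on the right is bounded, up to a constant depending only on $m$ and $a$, by $\ell_n^{-2}\sum_{k=1}^m\var{\sum_{t=1}^{\ell_n}\pr{X_t^k-\E{X_1^k}}}$.

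The key step is then the variance estimate $\var{\sum_{t=1}^{\ell_n}\pr{X_t^k-\E{X_1^k}}}=O\pr{\ell_n}$ for each $1\leq k\leq m$. Since $X_t^k$ is again a functional of $\pr{\varepsilon_{t-u}}_{u\in\Z}$, its physical dependence coefficients $\delta_i\pr{\pr{X_t^k}_{t\in\Z}}$ are exactly those appearing in \eqref{eq:condition_on_projectors}. Because $i^2\geq 1$ for $i\neq 0$ and $\delta_0\pr{\pr{X_t^k}_{t\in\Z}}\leq 2\norm{X_1^k}_2<\infty$ (using $\E{X_1^{2m}}<\infty$), condition \eqref{eq:condition_on_projectors} forces $\sum_{i\in\Z}\delta_i\pr{\pr{X_t^k}_{t\in\Z}}<\infty$, and the standard summable-dependence bound for Bernoulli shifts gives the desired $O\pr{\ell_n}$ control on the variance. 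Substituting back produces $\PP\pr{\norm{M_{n,1}-v_0}_2>a}=O\pr{\ell_n^{-1}}$ and therefore $\PP\pr{U_n\neq U^{\pr{\eta}}_n}=O\pr{b_n/\ell_n}$, which tends to $0$ by assumption \eqref{itm:cond_sur_bn_ln_Bernoulli}.

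The ``in particular'' claim is then immediate, since $\sqrt{b_n}\pr{U_n-U^{\pr{\eta}}_n}$ vanishes on $\ens{U_n=U^{\pr{\eta}}_n}$: for any $\varepsilon>0$ we have $\PP\pr{\abs{\sqrt{b_n}\pr{U_n-U^{\pr{\eta}}_n}}>\varepsilon}\leq\PP\pr{U_n\neq U^{\pr{\eta}}_n}\to 0$. I expect the only delicate point to be the variance estimate, but condition \eqref{eq:condition_on_projectors} is far stronger than the mere summability $\sum_i\delta_i<\infty$ that it uses here, so this step is routine; the remainder is just a union bound together with Chebyshev's inequality.
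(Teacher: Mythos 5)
Your proposal is correct and follows essentially the same route as the paper: the inclusion $\ens{U_n\neq U^{\pr{\eta}}_n}\subseteq\bigcup_j\ens{W_{n,j}\neq W^{\pr{\eta}}_{n,j}}$, a union bound with stationarity, Chebyshev's inequality, and the variance bound $\var{\sum_{t=1}^{\ell_n}X_t^k}=O\pr{\ell_n}$ via summability of the physical dependence coefficients (the paper's Lemma~\ref{lem:moments_stationary_sequence}), giving $\PP\pr{U_n\neq U^{\pr{\eta}}_n}=O\pr{b_n/\ell_n}\to 0$. Your explicit remark that $\delta_0$ must be handled separately (since the weight $i^2$ vanishes at $i=0$) is a point the paper passes over silently.
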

It thus suffices to prove the convergence in distribution
\begin{equation}\label{eq:conv_U_stat_Bernoulli_bis}
\sqrt{b_n}\pr{U_n^{\pr{\eta}} 
 -\frac 1{b_n\pr{b_n-1}}\sum_{1\leq j\neq k\leq 
b_n}\E{h\pr{W^{\pr{\eta}}_{n,j},W^{\pr{\eta}}_{n,k}} 
 }    }\to N\pr{0,4\gamma^2}.
\end{equation}
 To do so, we use a second approximation step and replace the $W^{\pr{\eta}}_{n,j}$'s by 
\begin{equation*}
 W_{n,j}^{\pr{M}}:= \sqrt{\ell_n}\pr{g\cdot \eta}
 \pr{ \pr{ \frac{1}{\ell_n}\sum_{t\in B_{n,j}} 
 \E{X_t^k\mid \Fca_{t-M}^{t+M}  } }_{k=1}^m
 },
\end{equation*}
where $\Fca_{M}^N:=\sigma\pr{\varepsilon_u,M\leq u\leq N}$ for $M,N\in\Z$ with $ M\leq N$. The random variables $X_t$ are thus replaced by random variables 
depending only on those $\varepsilon_{t-u}$ with $\abs{t-u}\leq M$. Note that this way, the entries of the U-statistic become almost independent (up to some small overlap in consecutive blocks). We define
$$U_n^{(M)}:= \frac 1{b_n\pr{b_n-1}}
 \sum_{1\leq j\neq k\leq b_n} h\pr{W^{\pr{M}}_{n,j},W^{\pr{M}}_{n,k}}.$$
We can now decompose the expression on the left hand side of \eqref{eq:conv_U_stat_Bernoulli_bis} for each fixed $M\geq 1$ via
\begin{equation}\label{eq:decomposition_UnM}
\sqrt{b_n}(U_n^{(M)}-\E{U_n^{(M)}})+R_{n,M},
\end{equation}
where the remainder term is given by the telescoping sum
\begin{align*}
 R_{n,M}:= \frac{1}{\sqrt{b_n}\pr{b_n-1}}\sum_{N\geq M}
\sum_{1\leq j\neq k\leq b_n}
 &\Big( h\pr{W^{\pr{N+1}}_{n,j},W^{\pr{N+1}}_{n,k}}-\E{h\pr{W^{\pr{N+1}}_{n,j},W^{\pr{N+1}}_{n,k} }  } \\
& - \Big(h\pr{W^{\pr{N}}_{n,j},W^{\pr{N}}_{n,k}}-\E{h\pr{W^{\pr{N}}_{n,j},W^{\pr{N}}_{n,k} }  }\Big)\Big).
\end{align*}
The following three lemmas show that the first term in \eqref{eq:decomposition_UnM} converges to the desired normal distribution, while the continuity of $g\cdot\eta$ will guarantee that the remainder term $R_{n,M}$ becomes asymptotically negligible.

\begin{lemma}\label{lem:conv_m_dep}
Let the assumptions of Theorem~\ref{thm:Bernoulli} hold. Then,  there exists an $M_0\in\N$ such that for all fixed $M\geq M_0$, the sequence 
 $\pr{\sqrt{b_n}(U_n^{(M)}-\mathbb{E}[U_n^{(M)}])}_{n\geq 1}$ converges in distribution to a centred normally distributed random variable with variance 
  \begin{equation*}
 4 \gamma_M^2:= 4\cov{ h\pr{\sigma_M N,\sigma_M N'}  } {h\pr{\sigma_M N,\sigma_M 
N''}},
 \end{equation*}
where $N$, $N'$ and $N''$ are three independent standard normal random 
variables and 
\begin{equation*}
 \sigma_M^2:=\sum_{t=-M-1}^{M+1}
  \cov{\E{\sum_{k=1}^m\frac{\partial g}{\partial x_k}\pr{v_0}X_0^k\Big| \Fca_{-M}^M  }}{\E{\sum_{k=1}^m\frac{\partial g}{\partial x_k}\pr{v_0}X_t^k\Big| \Fca_{t-M}^{t+M}
 }}.
\end{equation*}
\end{lemma}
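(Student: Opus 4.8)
The plan is to adapt the proof of Theorem~\ref{thm:TLC_triangular_array}, exploiting that for fixed $M$ and $n$ large enough that $\ell_n>2M$ the row $\pr{W^{\pr{M}}_{n,j}}_{1\le j\le b_n}$ is $1$-dependent: since $W^{\pr{M}}_{n,j}$ is measurable with respect to the innovations $\varepsilon_u$ with $\pr{j-1}\ell_n+1-M\le u\le j\ell_n+M$, two entries $W^{\pr{M}}_{n,j}$ and $W^{\pr{M}}_{n,k}$ are independent as soon as $\abs{j-k}\ge 2$, while consecutive blocks overlap only in the $2M$ innovations indexed by $\ens{j\ell_n+1-M,\dots,j\ell_n+M}$. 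As in the i.i.d.\ case I would use the Hoeffding decomposition relative to an independent copy $\til W$ of $W^{\pr{M}}_{n,1}$, setting $\theta_n=\E{h\pr{\til W,\til W'}}$, $h_{1,n}\pr{x}=\E{h\pr{x,\til W}}-\theta_n$ and $h_{2,n}\pr{x,y}=h\pr{x,y}-h_{1,n}\pr{x}-h_{1,n}\pr{y}-\theta_n$. Since $\E{h_{1,n}\pr{W^{\pr{M}}_{n,j}}}=0$, this gives
\begin{align*}
 U_n^{\pr{M}}-\E{U_n^{\pr{M}}}={}&\frac{2}{b_n}\sum_{j=1}^{b_n}h_{1,n}\pr{W^{\pr{M}}_{n,j}}\\
 &+\frac{1}{b_n\pr{b_n-1}}\sum_{1\le j\ne k\le b_n}\pr{h_{2,n}\pr{W^{\pr{M}}_{n,j},W^{\pr{M}}_{n,k}}-\E{h_{2,n}\pr{W^{\pr{M}}_{n,j},W^{\pr{M}}_{n,k}}}},
\end{align*}
and the goal becomes to show that the rescaled linear part is asymptotically $N\pr{0,4\gamma_M^2}$ while the degenerate part is negligible.

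The main input is a within-block central limit theorem. Taylor-expanding $g\cdot\eta$ at $v_0$ --- legitimate because $\eta$ confines the argument to the ball on which $g$ is differentiable, $\pr{g\cdot\eta}\pr{v_0}=g\pr{v_0}=0$ and $\nabla\pr{g\cdot\eta}\pr{v_0}=\nabla g\pr{v_0}$ --- reduces $W^{\pr{M}}_{n,1}$ to the linear statistic
\[
 T_n:=\frac{1}{\sqrt{\ell_n}}\sum_{t\in B_{n,1}}\sum_{k=1}^m\frac{\partial g}{\partial x_k}\pr{v_0}\pr{\E{X_t^k\mid\Fca_{t-M}^{t+M}}-\E{X_1^k}},
\]
whose summands form a centred, stationary, $2M$-dependent and square-integrable sequence (the assumption $\E{X_1^{2m}}<\infty$ is exactly what secures square-integrability for the top index $k=m$). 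A classical $m$-dependent central limit theorem then yields $T_n\claw\sigma_M N$ with $\sigma_M^2$ the corresponding long-run variance, and, the block averages being consistent for $v_0$, the delta-method remainder $W^{\pr{M}}_{n,1}-T_n$ tends to $0$ in probability. Because adjacent blocks share only $O\pr{M}$ of their $\ell_n\to\infty$ innovations, for every fixed pair $j\ne k$ the law of $\pr{W^{\pr{M}}_{n,j},W^{\pr{M}}_{n,k}}$ converges to that of $\pr{\sigma_M N,\sigma_M N'}$ with $N,N'$ independent.

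The delicate point, which I expect to be the main obstacle, is upgrading these convergences in distribution to convergence of the second moments that pin down the limiting variance, namely $\var{h_{1,n}\pr{W^{\pr{M}}_{n,1}}}\to\gamma_M^2$ and $\cov{h_{1,n}\pr{W^{\pr{M}}_{n,1}}}{h_{1,n}\pr{W^{\pr{M}}_{n,2}}}\to 0$. This needs uniform integrability of $\pr{W^{\pr{M}}_{n,1}}^2$, and here $\E{X_1^{2m}}<\infty$ is critical: for $k=m$ there is no room for a $\pr{2+\delta}$-th moment, so no uniform $\mathbb L^{2+\delta}$ bound is available. Instead I would observe that the coordinatewise normalised sums $S^{\pr{k}}_n:=\frac{1}{\sqrt{\ell_n}}\sum_{t\in B_{n,1}}\pr{\E{X_t^k\mid\Fca_{t-M}^{t+M}}-\E{X_1^k}}$ have \emph{exactly} computable variances converging to their long-run variance and converge in distribution to a centred Gaussian; by the Pratt/Scheff\'e principle (convergence in distribution together with convergence of the second moments forces uniform integrability of the squares) each $\pr{S^{\pr{k}}_n}^2$ is uniformly integrable. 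As $g\cdot\eta$ is globally Lipschitz with $\pr{g\cdot\eta}\pr{v_0}=0$, one has $\pr{W^{\pr{M}}_{n,1}}^2\le L^2\sum_{k=1}^m\pr{S^{\pr{k}}_n}^2$, so $\pr{W^{\pr{M}}_{n,1}}^2$ is uniformly integrable, the delta-method remainder is controlled in $\mathbb L^2$, and $\E{\pr{W^{\pr{M}}_{n,1}}^2}\to\sigma_M^2$. Combining this uniform integrability with the bound $\abs{h_{1,n}\pr{x}}\le C\pr{1+\abs{x}}$ and the joint convergence above yields $\var{h_{1,n}\pr{W^{\pr{M}}_{n,1}}}\to\gamma_M^2$ and, by the asymptotic independence of adjacent blocks, $\cov{h_{1,n}\pr{W^{\pr{M}}_{n,1}}}{h_{1,n}\pr{W^{\pr{M}}_{n,2}}}\to 0$.

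Finally, the $1$-dependent linear part $\frac{2}{\sqrt{b_n}}\sum_{j=1}^{b_n}h_{1,n}\pr{W^{\pr{M}}_{n,j}}$ has variance
\[
 \frac{4}{b_n}\pr{b_n\var{h_{1,n}\pr{W^{\pr{M}}_{n,1}}}+2\pr{b_n-1}\cov{h_{1,n}\pr{W^{\pr{M}}_{n,1}}}{h_{1,n}\pr{W^{\pr{M}}_{n,2}}}}\longrightarrow 4\gamma_M^2,
\]
and a central limit theorem for $m$-dependent stationary triangular arrays, whose Lindeberg condition is checked via the uniform integrability just obtained, delivers the $N\pr{0,4\gamma_M^2}$ limit; here $M_0$ is chosen so that $\sigma_M^2>0$, which is possible since $\sigma_M^2\to\sigma^2>0$ and guarantees a nondegenerate within-block limit. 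It then remains to discard the degenerate part by an $\mathbb L^2$ estimate: as $h_{2,n}$ is degenerate under the independent coupling, $1$-dependence leaves only $O\pr{b_n^2}$ of the $O\pr{b_n^4}$ covariance quadruples non-zero, each uniformly bounded by the preceding second-moment estimates, so that its variance is of order $1/b_n$. Slutsky's lemma then concludes.
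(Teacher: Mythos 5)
Your proposal is correct in substance but takes a genuinely different route from the paper. The paper does not work with the $1$-dependent row $\pr{W^{\pr{M}}_{n,j}}_{j}$ directly: it trims $M$ observations from each end of every block to form $\widetilde{W}^{\pr{M}}_{n,j}$, which are \emph{exactly} independent, shows via the Lipschitz continuity of $h$ and of $g\cdot\eta$ that $\sqrt{b_n}\,\E{\abs{U_n^{(M)}-\widetilde{U}_n^{(M)}}}\leq CM\sqrt{b_n/\ell_n}\to 0$ (this is where $b_n/\ell_n\to 0$ enters), and then invokes Theorem~\ref{thm:TLC_triangular_array} wholesale; the within-block analysis (Taylor expansion at $v_0$, uniform integrability of the squared entries via the global Lipschitz bound on $g\cdot\eta$ together with Lemma~\ref{lem:conver_variance_UI}, and Lemma~\ref{lem:convergence_of_sigma_n} to get $\gamma_{M,n}^2\to\gamma_M^2$) matches yours almost verbatim. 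You instead keep the $1$-dependent entries and rebuild the Hoeffding argument around the independent coupling, which is sound: your count that only the $O\pr{b_n^2}$ quadruples with no isolated index survive in the degenerate part is right (any index at distance at least $2$ from the other three kills the covariance through the degeneracy $\E{h_{2,n}\pr{x,\til{W}}}=0$), and your route to uniform integrability through the exactly computable second moments of the $2M$-dependent coordinate sums is a valid alternative to Lemma~\ref{lem:conver_variance_UI}. What your route buys is that it dispenses with $b_n/\ell_n\to 0$ for this particular lemma; what it costs is that the linear part now needs a central limit theorem for $1$-dependent stationary triangular arrays under a Lindeberg-type condition (which you would have to prove by truncation plus blocking, or cite), whereas the paper only ever needs the classical independent Lindeberg CLT already packaged in Theorem~\ref{thm:TLC_triangular_array}. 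One small imprecision: $M_0$ must be chosen so that $\gamma_M^2>0$ (which the paper extracts from $\gamma_M^2\to\gamma^2>0$ in Lemma~\ref{lem:conv_of_N_M}), not merely so that $\sigma_M^2>0$, if you intend to normalise the linear part by its standard deviation.
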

The central ingredient in the proof of Lemma \ref{lem:conv_m_dep} is Theorem \ref{thm:TLC_triangular_array}. To meet its conditions, we approximate $U_n^{(M)}$ by yet another U-statistic, which has independent entries (details are given in the next section).

\begin{lemma}\label{lem:conv_of_N_M}
Let the assumptions of Theorem~\ref{thm:Bernoulli} hold. Then the sequence $\pr{4\gamma_M^2}_{M\geq 1}$ converges  to  $4\gamma^2=4\cov{h\pr{\sigma N,\sigma N'}}{h\pr{\sigma N',\sigma N''}}$.
\end{lemma}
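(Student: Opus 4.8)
The plan is to prove that $\sigma_M^2\to\sigma^2$ as $M\to\infty$ and then to transport this convergence through the Gaussian covariance functional that defines $\gamma_M^2$ and $\gamma^2$. Throughout, set $Y_t:=\sum_{k=1}^m\frac{\partial g}{\partial x_k}\pr{v_0}X_t^k$ and $Y_t^{\pr{M}}:=\E{Y_t\mid\Fca_{t-M}^{t+M}}$, so that $\sigma^2=\sum_{t\in\Z}\cov{Y_0}{Y_t}$ and $\sigma_M^2=\sum_{t=-M-1}^{M+1}\cov{Y_0^{\pr{M}}}{Y_t^{\pr{M}}}$. Since the gradient of $g$ is bounded near $v_0$ and $\E{X_1^{2m}}<\infty$, we have $Y_0\in\mathbb{L}^2$, and the triangle inequality gives $\delta_i\pr{\pr{Y_t}_{t\in\Z}}\leq C\sum_{k=1}^m\delta_i\pr{\pr{X_t^k}_{t\in\Z}}$; hence condition \eqref{eq:condition_on_projectors} yields the summability $\sum_{i\in\Z}\abs{i}\,\delta_i\pr{\pr{Y_t}_{t\in\Z}}<\infty$, which is all that will be needed here.

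First I would establish a bound on $\abs{\cov{Y_0^{\pr{M}}}{Y_t^{\pr{M}}}}$ that is summable in $t$ and uniform in $M$. For $t\neq0$, putting $r:=\lceil\abs t/2\rceil-1$, the truncations $\E{Y_0\mid\Fca_{-r}^{r}}$ and $\E{Y_{\abs t}\mid\Fca_{\abs t-r}^{\abs t+r}}$ depend on disjoint blocks of innovations and are therefore independent; writing $\cov{Y_0}{Y_{\abs t}}$ as the resulting difference of covariances and combining bilinearity, the Cauchy--Schwarz inequality, the contraction property of conditional expectation and the standard coupling estimate $\norm{Y_0-\E{Y_0\mid\Fca_{-r}^r}}_2\leq\sum_{\abs i>r}\delta_i\pr{\pr{Y_t}_{t\in\Z}}$ yields, after using stationarity to reduce to $\abs t$,
\[
\abs{\cov{Y_0^{\pr{M}}}{Y_t^{\pr{M}}}}\leq 2\norm{Y_0}_2\sum_{\abs i\geq\lceil\abs t/2\rceil}\delta_i\pr{\pr{Y_t}_{t\in\Z}}=:\beta_t,\qquad\sum_{t\neq0}\beta_t<\infty .
\]
The crucial point, which I expect to be the main obstacle, is that this bound holds \emph{uniformly in $M$}: the truncated process $\pr{Y_t^{\pr{M}}}_{t\in\Z}$ is again a stationary Bernoulli shift whose dependence coefficients satisfy $\delta_i\pr{\pr{Y_t^{\pr{M}}}_{t\in\Z}}\leq\delta_i\pr{\pr{Y_t}_{t\in\Z}}$, since conditional expectation is an $\mathbb{L}^2$-contraction that commutes with the coupling defining $\delta_i$ and since $Y_0^{\pr{M}}$ does not depend on $\varepsilon_i$ for $\abs i>M$. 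The diagonal term is controlled uniformly by $\abs{\cov{Y_0^{\pr{M}}}{Y_0^{\pr{M}}}}\leq\norm{Y_0}_2^2$.

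Next I would pass to the limit termwise. For each fixed $t$, the filtrations $\Fca_{t-M}^{t+M}$ increase, as $M\to\infty$, to the $\sigma$-field generated by the whole innovation sequence, with respect to which $Y_t$ is measurable, so the martingale convergence theorem gives $Y_t^{\pr{M}}\to Y_t$ in $\mathbb{L}^2$; the $\mathbb{L}^2$-continuity of the covariance then gives $\cov{Y_0^{\pr{M}}}{Y_t^{\pr{M}}}\to\cov{Y_0}{Y_t}$. Setting $a_{M,t}:=\cov{Y_0^{\pr{M}}}{Y_t^{\pr{M}}}\1{\abs t\leq M+1}$, we have $\abs{a_{M,t}}\leq\beta_t$ with $\sum_t\beta_t<\infty$ and $a_{M,t}\to\cov{Y_0}{Y_t}$ for every $t$, so dominated convergence for series yields $\sigma_M^2=\sum_t a_{M,t}\to\sum_{t\in\Z}\cov{Y_0}{Y_t}=\sigma^2$.

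Finally I would deduce convergence of $\gamma_M^2$. Writing $\Psi\pr{v}:=\cov{h\pr{\sqrt v\,N,\sqrt v\,N'}}{h\pr{\sqrt v\,N,\sqrt v\,N''}}$ for $v\geq0$, the symmetry of $h$ together with the exchangeability of the independent standard normals $N,N',N''$ shows that $\gamma_M^2=\Psi\pr{\sigma_M^2}$ and $\gamma^2=\Psi\pr{\sigma^2}$ (the expression for $\gamma^2$ in Theorem~\ref{thm:Bernoulli} sharing $N'$ rather than $N$ only amounts to a relabeling). It then remains to check that $\Psi$ is continuous at $\sigma^2>0$: as $v\to v_0$, continuity of $h$ gives $h\pr{\sqrt v\,N,\sqrt v\,N'}\to h\pr{\sqrt{v_0}\,N,\sqrt{v_0}\,N'}$ almost surely, while the growth bound \eqref{eq:condition_on_h} furnishes, for $v$ in a neighbourhood of $v_0$, the square-integrable majorant $C\pr{1+\pr{\sqrt{v_0}+1}\pr{\abs N+\abs{N'}}}$, so dominated convergence makes $v\mapsto\E{h\pr{\sqrt v\,N,\sqrt v\,N'}h\pr{\sqrt v\,N,\sqrt v\,N''}}$ and $v\mapsto\E{h\pr{\sqrt v\,N,\sqrt v\,N'}}$ continuous, hence so is $\Psi$. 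Combining $\sigma_M^2\to\sigma^2$ with the continuity of $\Psi$ gives $4\gamma_M^2=4\Psi\pr{\sigma_M^2}\to4\Psi\pr{\sigma^2}=4\gamma^2$.
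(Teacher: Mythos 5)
Your proposal is correct, but the core step $\sigma_M^2\to\sigma^2$ is handled by a genuinely different mechanism than in the paper. You prove termwise convergence $\cov{Y_0^{\pr{M}}}{Y_t^{\pr{M}}}\to\cov{Y_0}{Y_t}$ via martingale convergence in $\mathbb L^2$ and then interchange limit and sum by exhibiting a summable envelope $\beta_t$ that is uniform in $M$, obtained from a blocking argument: further truncating at radius $\lceil\abs t/2\rceil-1$ makes the two conditional expectations functions of disjoint innovation blocks, hence independent, and the error is controlled by $\sum_{\abs i\geq\lceil\abs t/2\rceil}\delta_i\pr{\pr{Y_t}_{t\in\Z}}$, which is summable in $t$ precisely because $\sum_i\abs i\,\delta_i<\infty$. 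The paper instead rewrites $\sigma_M^2-\sigma^2$ as a single long-run variance of the approximation-error process $\pr{Y_t-\E{Y_t\mid\Fca_{t-M}^{t+M}}}_{t\in\Z}$, identifies it via Lemma~\ref{lem:conver_variance_UI} as $\lim_n n^{-1}\E{\pr{\sum_{t=1}^n\pr{Y_t-Y_t^{\pr{M}}}}^2}$, bounds this by $\big(\sum_i\delta_i\pr{\pr{Y_t-Y_t^{\pr{M}}}_{t\in\Z}}\big)^2$ through Lemma~\ref{lem:moments_stationary_sequence}, and sends $M\to\infty$ by martingale plus dominated convergence over the index $i$ rather than over $t$. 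The paper's route reuses its appendix machinery and needs no covariance-decay estimate, but it rests on an algebraic identity for $\sigma_M^2-\sigma^2$ that is only sketched ("a small calculation shows"); your route is more self-contained, makes the uniform domination explicit, and is insensitive to the exact truncation range in the definition of $\sigma_M^2$ (which the paper states as $\abs t\leq M+1$ in Lemma~\ref{lem:conv_m_dep} but uses as $\abs t\leq 2M+1$ in its proof). For the final step both arguments transport $\sigma_M^2\to\sigma^2$ through the Gaussian covariance functional; the paper invokes the Lipschitz continuity of $h$ directly, while you establish continuity of $v\mapsto\Psi\pr{v}$ by dominated convergence using the growth bound \eqref{eq:condition_on_h}, and you correctly observe that the apparent mismatch between $\cov{h\pr{\sigma N,\sigma N'}}{h\pr{\sigma N',\sigma N''}}$ and $\cov{h\pr{\sigma_M N,\sigma_M N'}}{h\pr{\sigma_M N,\sigma_M N''}}$ is only a relabeling of exchangeable variables.
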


\begin{lemma}\label{lem:approximation_by_m_dep}
Let the assumptions of Theorem~\ref{thm:Bernoulli} hold. Then, for each $\varepsilon>0$, it holds 
 \begin{equation*}
  \lim_{M\to \infty}\limsup_{n\to \infty}
  \PP(\abs{R_{n,M}}>\varepsilon  )=0.
 \end{equation*}
\end{lemma}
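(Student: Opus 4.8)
The plan is to show that $R_{n,M}$ is small in $L^2$, uniformly in $n$, once $M$ is large, so that Chebyshev's inequality finishes the job: it suffices to prove $\lim_{M\to\infty}\limsup_{n\to\infty}\norm{R_{n,M}}_2=0$, because then $\limsup_n\PP\pr{\abs{R_{n,M}}>\varepsilon}\leq\varepsilon^{-2}\limsup_n\E{R_{n,M}^2}$ vanishes with $M$. Writing $\widehat h^{\pr{N}}_{j,k}:=h\pr{W^{\pr{N}}_{n,j},W^{\pr{N}}_{n,k}}-\E{h\pr{W^{\pr{N}}_{n,j},W^{\pr{N}}_{n,k}}}$ and $S_N:=\sum_{1\leq j\neq k\leq b_n}\pr{\widehat h^{\pr{N+1}}_{j,k}-\widehat h^{\pr{N}}_{j,k}}$, so that $R_{n,M}=\frac1{\sqrt{b_n}\pr{b_n-1}}\sum_{N\geq M}S_N$, the triangle inequality in $L^2$ over the telescoping index $N$ gives $\norm{R_{n,M}}_2\leq\frac1{\sqrt{b_n}\pr{b_n-1}}\sum_{N\geq M}\norm{S_N}_2$. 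Everything then reduces to a bound on $\norm{S_N}_2=\sqrt{\var{S_N}}$ that, after division by $\sqrt{b_n}\pr{b_n-1}$, is summable in $N$ and $n$-free. (A crude triangle bound $\norm{S_N}_2\leq b_n^2\norm{\widehat h^{\pr{N+1}}_{1,2}-\widehat h^{\pr{N}}_{1,2}}_2$ leaves a spurious factor $\sqrt{b_n}$, so genuine cancellation in the double sum is needed.)

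The first ingredient is a bound on the single-block increment $V^{\pr{N}}_{n,j}:=W^{\pr{N+1}}_{n,j}-W^{\pr{N}}_{n,j}$. Since $g\cdot\eta$ is globally Lipschitz (its gradient is bounded on the box of condition (3), on whose closure it is Lipschitz, and it vanishes off the support of $\eta$), with a constant $K$ one has $\abs{V^{\pr{N}}_{n,j}}\leq K\sqrt{\ell_n}\,\norm{\pr{\tfrac1{\ell_n}\sum_{t\in B_{n,j}}\Delta^{\pr{N}}_{t,k}}_{k=1}^m}_2$, where $\Delta^{\pr{N}}_{t,k}:=\E{X_t^k\mid\Fca_{t-N-1}^{t+N+1}}-\E{X_t^k\mid\Fca_{t-N}^{t+N}}$. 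The variables $\Delta^{\pr{N}}_{t,k}$ are centred and $\Fca_{t-N-1}^{t+N+1}$-measurable, so for $\abs{s-t}>2\pr{N+1}$ they are independent, hence uncorrelated; counting the at most $C\min\pr{\ell_n,N}$ surviving covariances per index in $\E{\pr{\sum_{t\in B_{n,j}}\Delta^{\pr{N}}_{t,k}}^2}$ yields $\norm{V^{\pr{N}}_{n,1}}_2\leq CK\sqrt{\min\pr{\ell_n,N}}\,\sum_{k=1}^m\norm{\Delta^{\pr{N}}_{0,k}}_2$. Finally, splitting $\Delta^{\pr{N}}_{0,k}$ into the two increments obtained by adjoining $\varepsilon_{N+1}$ and $\varepsilon_{-N-1}$ to the conditioning field and using, for each, the coupling identity $\E{X_0^k\mid\Fca_{-N}^{N+1}}-\E{X_0^k\mid\Fca_{-N}^{N}}=\E{X_0^k-(X_0^k)^{*,N+1}\mid\Fca_{-N}^{N+1}}$ (valid because the innovation being added is independent of the coupled variable and of the smaller field), gives $\norm{\Delta^{\pr{N}}_{0,k}}_2\leq\delta_{N+1}\pr{\pr{X_t^k}_{t\in\Z}}+\delta_{-N-1}\pr{\pr{X_t^k}_{t\in\Z}}$.

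The second ingredient is $\var{S_N}$. Each summand $\widehat h^{\pr{N+1}}_{j,k}-\widehat h^{\pr{N}}_{j,k}$ is centred, depends only on the innovations within distance $N+1$ of the blocks $B_{n,j},B_{n,k}$, and by Lipschitz continuity of $h$ is bounded in $L^2$ by $2L\pr{\norm{V^{\pr{N}}_{n,j}}_2+\norm{V^{\pr{N}}_{n,k}}_2}$. Classifying the pairs $\pr{j,k},\pr{j',k'}$ by how many indices they share and whether the remaining blocks lie within the dependence range $r_n:=1+\ent{2\pr{N+1}/\ell_n}$, the covariance vanishes when the four blocks split into two independent groups; Cauchy--Schwarz on the survivors leaves $O\pr{b_n^2}$ diagonal terms, $O\pr{b_n^3}$ terms sharing exactly one index, and $O\pr{r_nb_n^3}$ terms with four distinct but adjacent blocks, whence $\var{S_N}\leq Cr_nb_n^3\norm{V^{\pr{N}}_{n,1}}_2^2$. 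Dividing by $b_n\pr{b_n-1}^2$ the powers of $b_n$ cancel, giving $\frac1{\sqrt{b_n}\pr{b_n-1}}\norm{S_N}_2\leq C\sqrt{r_n}\,\norm{V^{\pr{N}}_{n,1}}_2$. Since $r_n\min\pr{\ell_n,N}\leq CN$ uniformly in $n$ (the two regimes $N\leq\ell_n$ and $N>\ell_n$ both give $O(N)$), combining with the first ingredient produces the $n$-free bound $\frac1{\sqrt{b_n}\pr{b_n-1}}\norm{S_N}_2\leq C\sqrt N\sum_{k=1}^m\pr{\delta_{N+1}\pr{\pr{X_t^k}_{t\in\Z}}+\delta_{-N-1}\pr{\pr{X_t^k}_{t\in\Z}}}$. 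Summing over $N\geq M$ and using $\sqrt N\leq\pr{N+1}^2$ together with \eqref{eq:condition_on_projectors} shows the series converges, so its tail tends to $0$ as $M\to\infty$; this bounds $\limsup_n\norm{R_{n,M}}_2$ by an $n$-free quantity vanishing with $M$, completing the proof. I expect the variance estimate for $S_N$ to be the main obstacle: keeping the bookkeeping of non-vanishing covariances correct while tracking the dependence range $r_n$, and in particular checking that $\sqrt{r_n\min\pr{\ell_n,N}}=O\pr{\sqrt N}$ uniformly in $n$ so that the block overlap exactly cancels the thinning of the increments, is the delicate point; the coupling identity for $\Delta^{\pr{N}}_{0,k}$ is routine but must be handled carefully because the conditioning window is two-sided.
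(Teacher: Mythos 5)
Your proof is correct, and its central estimate takes a genuinely different route from the paper's. Both arguments reduce via Chebyshev to $\lim_M\limsup_n\norm{R_{n,M}}_2=0$ and telescope over $N$, but the paper then splits the supremum over $n$ into the regimes $\ell_n\geq 2\pr{N+1}$ and $\ell_n<2\pr{N+1}$: in the first it invokes a bespoke moment inequality for U-statistics of slightly overlapping block variables (Lemma~\ref{lem:moment_inequality_U_stats}, proved by a martingale/orthogonality decomposition), giving $b_n^{-3/2}\norm{S_N}_2\leq C\norm{W^{\pr{N+1}}_{n,1}-W^{\pr{N}}_{n,1}}_2$, while in the second it uses the crude triangle bound and absorbs the spurious $\sqrt{b_n}$ via $b_n<\ell_n\leq 2\pr{N+1}$. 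You instead estimate $\var{S_N}$ directly by counting the non-vanishing covariances among the centred, finite-range summands $G_{j,k}$; this handles both regimes simultaneously through the overlap parameter $r_n$ and cancels the powers of $b_n$ correctly, at the price of the combinatorial bookkeeping you flag (which checks out: non-disjointness of the extended supports forces one of the four cross-distances to be at most $r_n$, giving $O\pr{r_nb_n^3}$ surviving terms). Your bound on the single-block increment is also sharper: exploiting that the $\Delta^{\pr{N}}_{t,k}$ are centred and $2\pr{N+1}$-dependent yields the factor $\sqrt{\min\pr{\ell_n,N}}$, whereas the paper's route through Lemma~\ref{lem:moments_stationary_sequence} produces a factor of order $N$ from the $2N+3$ indices $\abs{i}\leq N+1$. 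Consequently your final tail bound $C\sqrt{N}\sum_k\pr{\delta_{N+1}\pr{\pr{X_t^k}_{t\in\Z}}+\delta_{-N-1}\pr{\pr{X_t^k}_{t\in\Z}}}$ improves on the paper's $CN^{3/2}\pr{\cdots}$, so for this particular lemma you would only need $\sum_i\abs{i}^{1/2}\delta_i<\infty$ rather than the full strength of \eqref{eq:condition_on_projectors}; what the paper's approach buys in exchange is a reusable moment inequality stated and proved once in the appendix. The one point where you match the paper's level of rigour rather than exceed it is the assertion that $g\cdot\eta$ is globally Lipschitz from the boundedness of $\nabla g$ on the box in condition (3); the paper makes the same assertion, so this is not a gap relative to its own proof.
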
  

In the final step, we apply Theorem~4.2 of \citet{MR0233396}. This theorem states that for stochastic processes $\pr{Y_{m,n}}_{m,n\geq 1}$, $\pr{Y'_n}_{n\geq 1}$, $\pr{Z_m}_{m\geq 1}$, and a random variable $Z$, satisfying
\begin{align*}
& Y_{m,n} \longrightarrow Z_m \mbox{ in distribution, as }    n\rightarrow \infty, \mbox{ for each } m, \\
& Z_m \longrightarrow Z \mbox{ in distribution, as }  m\rightarrow \infty,\\
& \lim_{m\to\infty}\limsup_{n\to\infty}\PP\pr{\abs{Y'_n-Y_{m,n}}>\varepsilon}=0, \mbox{ for all } \varepsilon >0, 
\end{align*}
we may conclude that $Y'_n\to Z$ in distribution.  
A combination of
Lemmas~\ref{lem:conv_m_dep}, \ref{lem:conv_of_N_M} and 
\ref{lem:approximation_by_m_dep} thus yields the desired convergence in \eqref{eq:conv_U_stat_Bernoulli_bis} and thus finishes the proof.

\subsection{Proof details} 
\label{Subsec: Proof details}
\begin{proof}[Proof of Lemma \ref{Lemma: Approx U(a) Bernoulli}]

We start by noting the following inclusions
\begin{equation*}
 \ens{U_n\neq U^{\pr{\eta}}_n}\subset \bigcup_{j=1}^{b_n}
 \ens{W_{n,j}\neq W^{\pr{\eta}}_{n,j}} 
 \subset \bigcup_{j=1}^{b_n}\bigcup_{k=1}^m
 \ens{\abs{ \pr{\frac{1}{\ell_n}
   \sum_{t\in B_{n,j}}X_t^k} -\E{X_1^k}}  >2a/m  }.
 \end{equation*}
Hence, using the fact that for each $k$, the 
random variables $\pr{\frac{1}{\ell_n}
   \sum_{t\in B_{n,j}}X_t^k -\E{X_1^k}}$, $j=1, \ldots, b_n,$  
have the same distribution, and by an application of
Chebychev's inequality, we obtain  
\begin{equation*}
 \PP (U_n\neq U^{\pr{\eta}}_n)\leq m^2\frac{b_n}{4a^2\ell_n }\sum_{k=1}^m
  \var{ \frac{1}{\sqrt{\ell_n}}
   \sum_{t= 1}^{\ell_n}X_t^k}.
\end{equation*}
By Lemma~\ref{lem:moments_stationary_sequence}, it holds $\var{ \frac{1}{\sqrt{\ell_n}}
   \sum_{t= 1}^{\ell_n}X_t^k}\leq \big(\sum_{i\in\Z} \delta_i((X_t^k)_{t\in\Z})\big)^2$ and thus, we have $\PP(U_n\neq U^{\pr{\eta}}_n)\leq Cb_n/\ell_n$, which converges to zero by assumption. 
\end{proof}

\begin{proof}[Proof of Lemma~\ref{lem:conv_m_dep}]
 Let $M\geq 1$ be fixed. We intend to approximate $U_n^{(M)}$ once more to obtain a U-statistic $\widetilde{U}_n^{(M)}$ with independent entries. Thus, we define
\begin{equation*}
 \widetilde{W}_{n,j}^{\pr{M}}:= \sqrt{\ell_n}\pr{g\cdot \eta}
 \pr{ \pr{\frac{1}{\ell_n}\sum_{t=\pr{j-1}\ell_n+M+1}^{j\ell_n
 -M-1} 
 \E{X_t^k\mid \Fca_{t-M}^{t+M} } }_{k=1}^m
 }
\end{equation*} 
 for $j=1, \ldots, b_n$, assuming that $n$ is large enough such that $2M\leq \ell_n$. Due to the shortening of the blocks $B_{n,j}$ by $M$ observations on each side, the $ \widetilde{W}_{n,j}^{\pr{M}}$'s are independent. Next, we define 
 $$\widetilde{U}_n^{(M)}: =   \frac 1{b_n\pr{b_n-1}}
 \sum_{1\leq j\neq k\leq b_n} h\pr{\widetilde{W}^{\pr{M}}_{n,j},\widetilde{W}^{\pr{M}}_{n,k}}.$$
Since the function 
$g\cdot \eta$ is Lipschitz-continuous, it holds
$$
 \abs{W_{n,j}^{\pr{M}}-\widetilde{W}_{n,j}^{\pr{M}}}
 \leq C\sum_{k=1}^m\frac{1}{\sqrt{\ell_n}}
 \sum_{t=\pr{j-1}\ell_n+1}^{\pr{j-1}\ell_n+M}
  \E{\abs{X_t^k}\Big|  \Fca_{t-M}^{t+M}}\\
  + C\sum_{k=1}^m\frac{1}{\sqrt{\ell_n}}
 \sum_{t=j\ell_n-M }^{j\ell_n}
  \E{\abs{X_t^k}\Big|  \Fca_{t-M}^{t+M}}.
$$
 Taking expectations, we obtain 
 $$
 \E{\abs{W_{n,j}^{\pr{M}}-\widetilde{W}_{n,j}^{\pr{M}}}}
 \leq C \frac{M}{\sqrt{\ell_n}} $$
since $\E{X_1^{m}}<\infty$ by assumption. Thus, for the U-statistic, we have
\begin{align*}
&\sqrt{b_n}\E{\abs{ \pr{U_n^{(M)}-\E{U_n^{(M)}}} - \Big(\widetilde{U}_n^{(M)}-\E{\widetilde{U}_n^{(M)}}\Big)}}\\
\leq & 2  \frac{\sqrt{b_n}}{b_n\pr{b_n-1}}
 \sum_{1\leq j\neq k\leq b_n} \E{\abs{h\pr{{W}^{\pr{M}}_{n,j},{W}^{\pr{M}}_{n,k}}-h\pr{\widetilde{W}^{\pr{M}}_{n,j},\widetilde{W}^{\pr{M}}_{n,k}}}} \\
 \leq & C \sqrt{b_n} \E{\abs{W_{n,1}^{\pr{M}}-\widetilde{W}_{n,1}^{\pr{M}}}} \leq CM \frac{\sqrt{b_n}}{\sqrt{\ell_n}},
\end{align*}
where the last but one inequality follows by the Lipschitz-continuity of the kernel $h$ and the stationarity of the ${W}^{\pr{M}}_{n,j}$'s and of the $\widetilde{W}^{\pr{M}}_{n,j}$'s.

In the following, it hence suffices to prove that $\sqrt{b_n}\Big(\widetilde{U}_n^{(M)}-\E{\widetilde{U}_n^{(M)}}\Big)$ converges in distribution to a centred normal random variable $N_M$ with variance $4\gamma_M^2$. 
We first show that 
\begin{equation*}
 \gamma_{M,n}^2:= 
 \cov{h\pr{  \widetilde{W}_{n,1}^{\pr{M}}    ,
   \widetilde{W}_{n,2}^{\pr{M}}}}{h\pr{  \widetilde{W}_{n,1}^{\pr{M}}    ,
   \widetilde{W}_{n,3}^{\pr{M}}}} \rightarrow \gamma_M^2
\end{equation*}
and afterwards the convergence in distribution
 $$\frac{\sqrt{b_n}}{\gamma_{M,n}}\Big(\widetilde{U}_n^{(M)}-\E{\widetilde{U}_n^{(M)}}\Big)\to  N({0}, {4}) ,$$
which combined yield the assertion.

We start by verifying $\gamma_{M,n}^2\rightarrow \gamma_M^2$ by means of Lemma \ref{lem:convergence_of_sigma_n}. To apply Lemma \ref{lem:convergence_of_sigma_n}, we have to check that the sequence $\big(\big(\widetilde{W}_{n,1}^{\pr{M}}\big)^2\big)_{n\geq 1}$ is uniformly integrable.
  By the assumptions on  $g$ and $\eta$,  the gradient of $g\cdot \eta$ is uniformly bounded over $\R^m$, such that
    \begin{align*}
  \pr{\widetilde{W}_{n,1}^{\pr{M}}}^2= &  \ell_n
  \pr{g^2 \cdot \eta^2}\pr{v_0+\pr{\frac{1}{\ell_n}\sum_{t= M+1}^{ \ell_n -M-1}
 \E{X_t^k\mid \Fca_{t-M}^{t+M} }  }_{k=1}^m -v_0 }\\
  \leq &\ell_n C^2 \sum_{k=1}^m \pr{\frac{1}{\ell_n}\sum_{t= M+1}^{ \ell_n -M-1}
 \E{X_t^k\mid \Fca_{t-M}^{t+M} }-\E{X_1^k  }}^2
  \end{align*}
and uniform integrability follows from Lemma~\ref{lem:conver_variance_UI}.
We additionally have to show that $\widetilde{W}_{n,1}^{\pr{M}}\to N\pr{0,\sigma_M^2}$ in distribution to apply Lemma \ref{lem:convergence_of_sigma_n}. To do so, we will use  the differentiability of $g\cdot\eta$ at $v_0$ and the fact that $\pr{g\cdot\eta}\pr{v_0}=0$ in order to write
 \begin{equation*}\label{eq:Taylor_g_eta}
  \pr{g\cdot \eta}\pr{v_0+z}=\sum_{k=1}^mz_k\frac{\partial g}{\partial x_k}\pr{v_0}+\varepsilon\pr{z},
 \end{equation*}
where $\varepsilon\pr{z}/\norm{z}_2\to 0$ as $\norm{z}_2\to 0$. Setting $z=\pr{\frac{1}{\ell_n}\sum_{t=M+1}^{\ell_n-M-1}\big(\E{X_t^k\mid\F_{t-M}^{t+M}}-\E{X_1^k}\big) }_{k=1}^m$, we thus obtain
\begin{align*}\label{eq:taylor_expansion_Wn1}
 \widetilde{W}_{n,1}^{\pr{M}}= & \frac{1}{\sqrt{\ell_n}}\sum_{t=M+1}^{\ell_n-M-1}\sum_{k=1}^m
 \frac{\partial g}{\partial x_k}\pr{v_0}\big(\E{X_t^k\mid\F_{t-M}^{t+M}}-\E{X_1^k}\big) \\
& +\sqrt{\ell_n}\varepsilon\pr{ \pr{\frac{1}{\ell_n}\sum_{t=M+1}^{\ell_n-M-1}\big(\E{X_t^k\mid\F_{t-M}^{t+M}}-\E{X_1^k} \big) }_{k=1}^m  }.
\end{align*}
By the central limit theorem (see Lemma \ref{lem:TLC_suite_bernoullienne} in the appendix), the first of the above terms converges to a centred normal distribution with variance $\sigma_M^2$.
Moreover, due to the properties of the function $\varepsilon$ and the strong law of large 
numbers, the second of the above terms converges in probability to $0$. Indeed, 
\begin{align*}
&\sqrt{\ell_n}\varepsilon\pr{ \pr{\frac{1}{\ell_n}\sum_{t=M+1}^{\ell_n-M-1}\big(\E{X_t^k\mid\F_{t-M}^{t+M}}-\E{X_1^k}\big)  }_{k=1}^m   }\\
= & \frac{\varepsilon\pr{
\pr{\frac{1}{\ell_n}\sum_{t=M+1}^{\ell_n-M-1}\big(\E{X_t^k\mid\F_{t-M}^{t+M}}-\E{X_1^k}\big) }_{k=1}^m
}}{\sqrt{\sum_{k=1}^m \frac{1}{\ell_n}
\pr{\sum_{t=M+1}^{\ell_n-M-1}\big(\E{X_t^k\mid\F_{t-M}^{t+M}}-\E{X_1^k}\big)}^2  }  }\\
&\cdot \sqrt{\sum_{k=1}^m \frac{1}{\ell_n}
\pr{\sum_{t=M+1}^{\ell_n-M-1}\big(\E{X_t^k\mid\F_{t-M}^{t+M}}-\E{X_1^k}\big)}^2  },
\end{align*}
which is a product of the form $A_n B_n$, where $\pr{A_n}_{n\geq 1}$ 
converges in probability  to $0$ and $\pr{B_n}_{n\geq 1}$ is tight.
We hence have $\widetilde{W}_{n,1}^{\pr{M}}\to N\pr{0,\sigma^2_M}$ in distribution  and Lemma \ref{lem:convergence_of_sigma_n} implies $\gamma_{M,n}^2\rightarrow \gamma_M^2$. 

Turning towards showing
$\sqrt{b_n}\big(\widetilde{U}_n^{(M)}-\mathbb{E}[\widetilde{U}_n^{(M)}]\big)/\gamma_{M,n}\rightarrow N({0}, {4})$ in distribution, we have to check the assumptions of Theorem \ref{thm:TLC_triangular_array} and  therefore have to prove the uniform integrability of $((\widetilde{W}_{n,1}^{\pr{M}}/\gamma_{M,n})^2)_{n\geq 1}$. The uniform integrability of $\big(\big(\widetilde{W}_{n,1}^{\pr{M}}\big)^2\big)_{n\geq 1}$ has already been shown above. Moreover, since we will show  in the proof of Lemma~\ref{lem:conv_of_N_M} below that $\gamma_M^2\to \gamma^2>0$, there exists an $M_0$ such that for all $M\geq M_0$, it holds $\gamma_M^2>0$. Since for fixed $M\geq M_0$, we have shown above that $\gamma_{M,n}^2\rightarrow\gamma_M^2$, there exists an $n_{0,M}$ such that $\gamma_{M,n}^2 \geq \gamma_M^2/2>0$ for all $n\geq n_{0,M}$. Hence, the sequence $(\gamma_{M,n}^{-2})_{n\geq n_{0,M}}$ is bounded and $((\widetilde{W}_{n,1}^{\pr{M}}/\gamma_{M,n})^2)_{n\geq n_{0,M}}$ is uniformly integrable, such that Theorem \ref{thm:TLC_triangular_array} yields the desired convergence.
\end{proof}

\begin{proof}[Proof of Lemma~\ref{lem:conv_of_N_M}]
It suffices to prove that $\gamma_M^2\to \gamma^2$. Since $h$ is 
Lipschitz-continuous, the mapping $x\mapsto
\cov{ h\pr{xN,xN'}}{h\pr{xN,xN''}}$ is continuous,  
where $N$, $N'$ and $N''$ denote three independent standard normal random 
variables. 
Therefore, we only need to check that $\sigma_M^2\to \sigma^2$, where
\begin{equation*}
 \sigma_M^2=\sum_{t=-2M-1}^{2M+1}
  \cov{\E{\sum_{k=1}^m\frac{\partial g}{\partial x_k}\pr{v_0}X_0^k\Big| \Fca_{-M}^M  }}{\E{\sum_{k=1}^m\frac{\partial g}{\partial x_k}\pr{v_0}X_t^k\Big| \Fca_{t-M}^{t+M}}}
\end{equation*}
and 
\begin{equation*}
 \sigma^2=\sum_{t\in\Z}\cov{\sum_{k=1}^m\frac{\partial g}{\partial x_k}\pr{v_0}X_0^k }{ \sum_{k=1}^m\frac{\partial g}{\partial x_k}\pr{v_0}X_t^k }.
\end{equation*}
 By construction, $\sigma_M^2=\sum_{t\in\Z}
  \cov{\E{\sum_{k=1}^m\frac{\partial g}{\partial x_k}\pr{v_0}X_0^k\Big| \Fca_{-M}^M  }}{\E{\sum_{k=1}^m\frac{\partial g}{\partial x_k}\pr{v_0}X_t^k\Big| \Fca_{t-M}^{t+M}}}$ and a small calculation shows
\begin{align*}
& \abs{\sigma_M^2-\sigma^2}\\
 =& \abs{\sum_{t\in\Z}\cov{ \sum_{k=1}^m\frac{\partial g}{\partial x_k}\pr{v_0}\big(X_0^k-\E{X_0^k\mid \Fca_{-M}^M  }\big)}{\sum_{k=1}^m\frac{\partial g}{\partial x_k}\pr{v_0}\big(X_t^k-\E{X_t^k\mid \Fca_{t-M}^{t+M}  }\big)} }.
 \end{align*}
Note that 
\begin{align*}
& \sum_{i\in\Z}\delta_i\Bigg(\Bigg(\sum_{k=1}^m\frac{\partial g}{\partial x_k}\pr{v_0}\bigg(X_t^k-\E{X_t^k\Big| \Fca_{t-M}^{t+M}}\bigg)\Bigg)_{t\in\Z}\Bigg)\\
 \leq &\sum_{i\in\Z}\sum_{k=1}^m\frac{\partial g}{\partial x_k}\pr{v_0}\delta_i\bigg(\bigg(X_t^k-\E{X_t^k\Big| \Fca_{t-M}^{t+M}}\bigg)_{t\in\Z}\bigg)
 \leq 2\sum_{i\in\Z}\sum_{k=1}^m\frac{\partial g}{\partial x_k}\pr{v_0}\delta_i\big(\big(X_t^k\big)_{t\in\Z}\big),
\end{align*} 
which is finite by assumption \eqref{eq:condition_on_projectors}. We can thus apply Lemma~\ref{lem:conver_variance_UI} from the appendix to the above sum of covariances and obtain
  \begin{align*}
 \abs{\sigma_M^2-\sigma^2}
 & =  \lim_{n\to \infty}\frac{1}{n}\cdot \E{\pr{  \sum_{t=1}^n\sum_{k=1}^m\frac{\partial g}{\partial x_k}\pr{v_0}\pr{X_t^k-\E{X_t^k\Big| \Fca_{t-M}^{t+M}  }}  } ^2}\\
 &\leq  \bigg(\sum_{i\in\Z}
 \delta_i\bigg(\bigg(\sum_{k=1}^m\frac{\partial g}{\partial x_k}\pr{v_0}\pr{X_t^k-\E{X_t^k\Big| \Fca_{t-M}^{t+M}  }}\bigg)_{t\in\Z}    \bigg)\bigg)^2, 
 \end{align*}
where the last inequality follows by Lemma~\ref{lem:moments_stationary_sequence}. We have already shown above that the last expression is finite. Moreover, for each fixed $i$, it holds by the martingale convergence theorem that
$$ \lim_{M\rightarrow \infty} \delta_i\bigg(\bigg(\sum_{k=1}^m\frac{\partial g}{\partial x_k}\pr{v_0}\pr{X_t^k-\E{X_t^k\Big| \Fca_{t-M}^{t+M}  }}\bigg)_{t\in\Z}    \bigg)=0$$
and by dominated convergence, it thus follows $ \abs{\sigma_M^2-\sigma^2}\rightarrow 0$. 
\end{proof}

\begin{proof}[Proof of Lemma~\ref{lem:approximation_by_m_dep}]
Due to Chebychev's inequality, it suffices to prove
\begin{equation*}
 \lim_{M\to \infty}\limsup_{n\to \infty}
 \norm{R_{n,M}}_2=0.
\end{equation*}
By the definition of $R_{n,M}$, the above equality holds if the sum
$\sum_{N\geq 1}a_N$ converges, where
\begin{multline*}
a_N:=\sup_{n\geq 1}
 \frac{1}{b_n^{3/2}}
\left\lVert \sum_{1\leq j\neq k\leq b_n}
 \Big( h\pr{W^{\pr{N+1}}_{n,j},W^{\pr{N+1}}_{n,k}}-\E{h\pr{W^{\pr{N+1}}_{n,j},W^{\pr{N+1}}_{n,k} }  } \right.\\
\left.- \Big(h\pr{W^{\pr{N}}_{n,j},W^{\pr{N}}_{n,k}}-\E{h\pr{W^{\pr{N}}_{n,j},W^{\pr{N}}_{n,k} }  }\Big)\Big)\right\rVert_2.
\end{multline*}
 Splitting the supremum up into those cases of $n$ for which $\ell_n\geq 2(N+1)$ and those for which $\ell_n< 2(N+1)$, we have to prove the convergence of  $\sum_{N\geq 1}a_{N,1}$
and $\sum_{N\geq 1}a_{N,2}$, where
\begin{multline*}
a_{N,1}:=\sup_{n\geq 1, 2\pr{N+1}\leq \ell_n}
 \frac{1}{b_n^{3/2}}
\left\lVert \sum_{1\leq j\neq k\leq b_n}
 \Big( h\pr{W^{\pr{N+1}}_{n,j},W^{\pr{N+1}}_{n,k}}-\E{h\pr{W^{\pr{N+1}}_{n,j},W^{\pr{N+1}}_{n,k} }  } \right.\\
\left.- \Big(h\pr{W^{\pr{N}}_{n,j},W^{\pr{N}}_{n,k}}-\E{h\pr{W^{\pr{N}}_{n,j},W^{\pr{N}}_{n,k} }  }\Big)\Big)\right\rVert_2,
\end{multline*}
\begin{multline*}
a_{N,2}:=\sup_{n\geq 1, 2\pr{N+1}> \ell_n}
 \frac{1}{b_n^{3/2}}
\left\lVert \sum_{1\leq j\neq k\leq b_n}
 \Big( h\pr{W^{\pr{N+1}}_{n,j},W^{\pr{N+1}}_{n,k}}-\E{h\pr{W^{\pr{N+1}}_{n,j},W^{\pr{N+1}}_{n,k} }  } \right.\\
\left.- \Big(h\pr{W^{\pr{N}}_{n,j},W^{\pr{N}}_{n,k}}-\E{h\pr{W^{\pr{N}}_{n,j},W^{\pr{N}}_{n,k} }  }\Big)\Big)\right\rVert_2.
\end{multline*}
We bound $a_{N,1}$ by an application of Lemma~\ref{lem:moment_inequality_U_stats} from the appendix to obtain
\begin{equation*}
 a_{N,1}\leq C\sup_{n\geq 1, 2\pr{N+1}\leq \ell_n}\norm{W_{n,1}^{\pr{N+1}}-W_{n,1}^{\pr{N}} }_2.
\end{equation*}
In order to bound $a_{N,2}$, we notice that by assumption,
 $b_n<\ell_n\leq 2(N+1)\leq CN$ for $n$ large enough. Consequently, by the Lipschitz-continuity of $h$,
\begin{equation*}
 a_{N,2}\leq 2\sup_{n\geq 1, 2\pr{N+1}> \ell_n}\sqrt{b_n}\norm{W_{n,1}^{\pr{N+1}}-W_{n,1}^{\pr{N}} }_2\leq 2\sup_{n\geq 1, 2\pr{N+1}> \ell_n}\sqrt{N}\norm{W_{n,1}^{\pr{N+1}}-W_{n,1}^{\pr{N}} }_2.
\end{equation*}
Using the Lipschitz-continuity of $g\cdot\eta$ and  Lemma~\ref{lem:moments_stationary_sequence},
we derive that
\begin{equation*}
 \norm{W_{n,1}^{\pr{N+1}}-W_{n,1}^{\pr{N}} }_2\leq C\sum_{k=1}^m\sum_{i\in\Z}
 \delta_i\pr{\pr{\E{X_t^k\mid\Fca_{t-N-1}^{t+N+1} }-\E{X_t^k\mid\Fca_{t-N }^{t+N} } }_{t\in\Z}}.
\end{equation*}
The above summands are $0$ for
$\abs{i}\geq N+2$, while for $\abs{i}< N+2$,  
we get by similar arguments as in the proof of Lemma~\ref{lem:norm_approx_partial_sums} that
\begin{equation*}
  \delta_i\pr{\pr{\E{X_t^k\mid\Fca_{t-N-1}^{t+N+1} }-\E{X_t^k\mid\Fca_{t-N }^{t+N} } }_{t\in\Z}}
  \leq \delta_{N+1}\pr{ \pr{X_t^k}_{t\in\Z} }
  +\delta_{-N-1}\pr{ \pr{X_t^k}_{t\in\Z} }.
\end{equation*}
We thus obtain the bounds $$a_{N,1}\leq C N\sum_{k=1}^m\big(\delta_{N+1}\big( \pr{X_t^k}_{t\in\Z} \big)+
\delta_{-N-1}\big(\pr{X_t^k}_{t\in\Z} \big)\big)$$ and $$a_{N,2}\leq C N^{3/2}\sum_{k=1}^m\big(\delta_{N+1}\big( \pr{X_t^k}_{t\in\Z} \big)+\delta_{-N-1}\big(\pr{X_t^k}_{t\in\Z} \big)\big).$$
The convergence of  $\sum_{N\geq 1}a_{N,1}$ and $\sum_{N\geq 1}a_{N,2}$ then follows from
assumption \eqref{eq:condition_on_projectors}.
\end{proof}

\begin{proof}[Proof of Proposition \ref{Prop: Replacing centring Bernoulli}]
The proof is divided into two parts. First, we show that
 \begin{equation*}
 \lim_{n\to \infty}\sqrt{b_n}\pr{ \frac 1{b_n\pr{b_n-1}}\sum_{1\leq j\neq k\leq
b_n}\pr{\E{h\pr{W^{\pr{\eta}}_{n,j},W^{\pr{\eta}}_{n,k}}
 } -\E{h\pr{Z_{n,j},Z_{n,k}}
 } }                     }=0,
 \end{equation*}
where
 \begin{equation*}
 Z_{n,j}=\frac{1}{\sqrt{\ell_n}}\sum_{k=1}^m\frac{\partial g}{\partial x_k}\pr{v_0}\sum_{t\in B_{n,j}}\pr{X_t^k-\E{X_1^k}},
\end{equation*}
and afterwards that 
 \begin{equation*}
 \lim_{n\to \infty}\sqrt{b_n}\pr{ \frac 1{b_n\pr{b_n-1}}\sum_{1\leq j\neq k\leq
b_n} \E{h\pr{Z_{n,j},Z_{n,k}}
  }  -\E{  h\pr{Z_n,Z'_n }}                  }=0.
 \end{equation*}
The assertion then follows from Theorem \ref{thm:Bernoulli}. 

Starting with the first part, due to the Lipschitz-continuity of $h$ and stationarity, it suffices to show that
\begin{equation*}
 \lim_{n\to\infty}\sqrt{b_n}\E{ \abs{W^{\pr{\eta}}_{n,1}-Z_{n,1}  }}=0.
\end{equation*}
By the assumptions imposed on $g$ and $\eta$, there exists a constant $C$ such that for each $z=(z_k)_{k=1}^m\in\R^m$, it holds
\begin{equation*}
 \abs{(g\cdot \eta)\pr{z}-\sum_{k=1}^m\frac{\partial g}{\partial x_k }\pr{v_0}
 \pr{z_k-\E{X_1^k}}}\leq C\norm{ z-v_0 }_2.
\end{equation*}
With the choice $z=\pr{\frac{1}{\ell_n}\sum_{t\in B_{n,1}}X_t^k }_{k=1}^m$, this yields
\begin{equation*}
 \sqrt{b_n}\E{ \abs{W^{\pr{\eta}}_{n,j}-Z_{n,1}  }}
 \leq C\sqrt{b_n\ell_n}\sum_{k=1}^{m}
 \E{\pr{\frac{1}{\ell_n}\sum_{t\in B_{n,1}} \pr{X_t^k-\E{X_1^k}}   }^2}.
\end{equation*}
Lemma~\ref{lem:moments_stationary_sequence} from the appendix now implies that
$\sqrt{b_n}\E{ \abs{W^{\pr{\eta}}_{n,j}-Z_{n,1}  }}\leq C\sqrt{b_n/\ell_n}$,
which converges to $0$ by assumption.

Turning towards the second part, we define
\begin{equation*}
 A_{n,j}:=
  \frac{1}{\sqrt{\ell_n}}\sum_{k=1}^m\frac{\partial g}{\partial x_k}\pr{v_0}\sum_{t\in B_{n,j}}
 \pr{ \E{X_t^k\mid \sigma\pr{\varepsilon_u,\abs{u-t}\leq \ell_n^{\kappa}   }}-\E{X_1^k} }
\end{equation*}
and
\begin{equation*}
 A'_{n,j}:=\frac{1}{\sqrt{\ell_n}}\sum_{k=1}^m\frac{\partial g}{\partial x_k}\pr{v_0}\sum_{t\in B'_{n,j}}\pr{\E{X_t^k\mid \sigma\pr{\varepsilon_u,\abs{u-t}\leq \ell_n^{\kappa}   }}-\E{X_1^k}},
\end{equation*}
where
\begin{equation*}
 B'_{n,j}:=\ens{k\in \N, (j-1)\ell_n+1+\ell_n^{\kappa }\leq k\leq j\ell_n -\ell_n^{ \kappa}}.
\end{equation*}
Note that by construction, the sequence of random variables $\pr{A'_{n,j}}_{j\geq 1}$ is independent. Moreover, the following decomposition holds
\begin{align*}
& \sqrt{b_n}\abs{ \frac 1{b_n\pr{b_n-1}}\sum_{1\leq j\neq k\leq b_n} \E{h\pr{Z_{n,j},Z_{n,k}}} -\E{  h\pr{Z_n,Z'_n }}  }\\
  \leq & \sqrt{b_n}\frac 1{b_n\pr{b_n-1}}\sum_{1\leq j\neq k\leq b_n}\E{\abs{h\pr{Z_{n,j},Z_{n,k}}-h\pr{A'_{n,j},A'_{n,k}}}}\\
+ &\sqrt{b_n}\frac 1{b_n\pr{b_n-1}}\abs{\sum_{1\leq j\neq k\leq b_n}\pr{\E{h\pr{A'_{n,j},A'_{n,k}}}-\E{  h\pr{Z_n,Z'_n }} }}.
\end{align*}
By the Lipschitz-continuity of $h$ combined with stationarity, we can bound the first of the above terms by
$$ C\sqrt{b_n} \E{\abs{Z_{n,1}-A'_{n,1}  }}\leq C\sqrt{b_n} \E{\abs{Z_{n,1}-A_{n,1}  }}+C\sqrt{b_n} \E{\abs{A_{n,1}-A'_{n,1}  }}.$$ 
An application of Lemma \ref{lem:norm_approx_partial_sums} from the appendix yields 
\begin{align*}
\sqrt{b_n} \E{\abs{Z_{n,1}-A_{n,1}  }}\leq & C\sqrt{b_n}\sum_{k=1}^m \ell_n^{\kappa} \sum_{i: \abs{i}>\ell_n^{\kappa}}\delta_{i}\pr{\pr{ X_{t}^k}_{t\in\Z}}\\
\leq & C\sqrt{b_n/\ell_n}\ell_n^{\kappa/2}\sum_{k=1}^m\sum_{i: \abs{i}>\ell_n^{\kappa}}\ell_n^{\kappa/2+1/2}\delta_{i}\pr{\pr{ X_{t}^k}_{t\in\Z}}\\
\leq &C\sqrt{b_n/\ell_n}\ell_n^{\kappa/2}\sum_{k=1}^m\sum_{i: \abs{i}>\ell_n^{\kappa}}\abs{i}^{1/2+1/(2\kappa)} \delta_{i}\pr{\pr{ X_{t}^k}_{t\in\Z}},
\end{align*}
which tends  to  zero by the assumptions on the dependence coefficients and since $b_n=o(\ell_n^{1-\kappa})$.
By Lemma~\ref{lem:moments_stationary_sequence}, 
\begin{align*}
\sqrt{b_n}\E{\abs{A_{n,1}-A'_{n,1}}}\leq &C \frac{\sqrt{b_n}}{\sqrt{\ell_n}}\sum_{k=1}^m\norm{\sum_{t=1}^{\ell_n^{\kappa}} \E{X_t^k\mid \sigma\pr{\varepsilon_u,\abs{u-t}\leq \ell_n^{\kappa}   }}}_2\\
\leq & C \frac{\sqrt{b_n}}{\sqrt{\ell_n}}\sum_{k=1}^m \ell_n^{\kappa/2} \sum_{i\in\Z} \delta_i((\E{X_t^k\mid \sigma\pr{\varepsilon_u,\abs{u-t}\leq \ell_n^{\kappa}   }})_{t\in\Z})\\
\leq & C\frac{\sqrt{b_n}}{\sqrt{\ell_n}}\sum_{k=1}^m \ell_n^{\kappa/2} \sum_{i: \abs{i}\leq \ell_n^{\kappa}} \delta_i((X_t^k)_{t\in\Z}),
\end{align*}
which likewise tends  to  zero.

For the second of the above terms, we once more use the Lipschitz-continuity of $h$ and stationarity in order to obtain the upper bound $\sqrt{b_n}\E{\abs{A'_{n,1}-Z_n}}$, whose convergence has already been shown above since $Z_n=Z_{n,1}$.
\end{proof}

\begin{proof}[ Proof of Corollary~\ref{cor: replacing centring term bernoulli}]
It holds
\begin{align}
\nonumber & \sqrt{b_n}\pr{\E{\abs{Z_n-Z'_n}}-\sigma\E{\abs{N-N'}}  }
=  \sqrt{2}\sigma \sqrt{b_n}\pr{\frac{\E{\abs{Z_n-Z'_n}}}{\sqrt{2}\sigma}-\E{\abs{N}}}\\
=&  \sqrt{b_n}\E{\abs{Z_n-Z'_n}}\frac{\sqrt{\Var{(Z_n)}}-\sigma}{\sqrt{\Var{(Z_n)}}}
 + \sqrt{2}\sigma \sqrt{b_n}\pr{\frac{\E{\abs{Z_n-Z'_n}}}{\sqrt{2\Var{(Z_n)}}}-\E{\abs{N}}}.
 \label{eq:decomposition_centreing}
\end{align}
For the first of these terms, it suffices to  show that 
$\sqrt{b_n}\abs{\sqrt{\Var{(Z_n)}}-\sigma}\to 0$
since $\E{\abs{Z_n-Z'_n}}$ can be bounded by a constant independent of $n$ due to Lemma \ref{lem:moments_stationary_sequence} and since for $\Var{(Z_n)}\rightarrow \sigma^2>0$, it holds $\Var{(Z_n)}>\sigma^2/2$ for all $n$ large enough. 
Proceeding as in the proof of Proposition~2 in \citet*{MR2988107}, we derive that  for a centered time series $\pr{Y_t}_{t\in\Z}$ such that 
$Y_t=f\pr{\pr{\varepsilon_{t-u}}_{u\geq 0}}$, one has 
\begin{multline}
\abs{\operatorname{Var}\pr{\frac{1}{\sqrt{N}}\sum_{t=1}^NY_t}-\sum_{t\in\Z}\operatorname{Cov}\pr{Y_0,Y_t}}\\
\leq \sum_{j:\abs{j}>N}\sum_{i\in\Z}\delta_{i}\pr{\pr{Y_t}_{t\in\Z}}
\delta_{i-j}\pr{\pr{Y_t}_{t\in\Z}}
+2\sum_{j=1}^N \frac jN \sum_{i\in\Z}\delta_{i}\pr{\pr{Y_t}_{t\in\Z}}
\delta_{i-j}\pr{\pr{Y_t}_{t\in\Z}}.
\end{multline}
This follows from the following arguments. First, we expand $\operatorname{Var}\pr{\sum_{t=1}^NY_t/\sqrt{N}}$ in terms of $\operatorname{Cov}\pr{Y_0,Y_t}$ via a use of stationarity. Then we write 
\begin{equation}
\operatorname{Cov}\pr{Y_0,Y_t}=\sum_{i\in\Z}\E{P_i\pr{Y_0}P_i\pr{Y_t}}, 
\end{equation}
where $P_i\pr{Y}:=\E{Y\mid\sigma\pr{\varepsilon_u,u\leq i}}-
\E{Y\mid\sigma\pr{\varepsilon_u,u\leq i-1}}$. Finally, we use
the fact that $\abs{\E{P_i\pr{Y_0}P_i\pr{Y_t}}}\leq \norm{P_i\pr{Y_0}}_2
\norm{P_i\pr{Y_t}}_2$ due to the Cauchy-Schwarz inequality and 
Theorem~1 in \citet{MR2172215}.
Letting $N=\ell_n$, $Y_t=\sum_{k=1}^m\frac{\partial g}{\partial x_k}
\pr{v_0} \pr{X_t^k-\E{X_1^k}}$ and $a_i:=\max_{1\leq k\leq m}\delta_{i}\pr{\pr{X_t^k}_{t\in\Z}  }$, we infer that 
for some constant $C$ independent of $n$, 
\begin{multline*}
\sqrt{b_n}\abs{\Var{(Z_n)}-\sigma^2}
\leq C\sqrt{b_n} \sum_{k=1}^m\sum_{i\geq 0}\sum_{j=1}^{\ell_n}\frac{j}{\ell_n}\delta_{i}\pr{\pr{X_t^k}_{t\in\Z}  }\delta_{i-j}\pr{\pr{X_t^k}_{t\in\Z}}\mathbf{1}_{i\geq j}\\
+C\sqrt{b_n} \sum_{k=1}^m\sum_{i\geq 0}\sum_{j=\ell_n+1}^{\infty}\frac{j}{\ell_n}\delta_{i}\pr{\pr{X_t^k}_{t\in\Z}  }\delta_{i-j}\pr{\pr{X_t^k}_{t\in\Z}}\mathbf{1}_{i\geq j}.
\end{multline*}
Denote $A:=\sum_{i\geq 0}a_i$. Then elementary computations give 
\begin{align*}
\sum_{i\geq 0}\sum_{j=1}^{\ell_n}\frac{j}{\ell_n}a_ia_{i-j}\mathbf{1}_{i\geq j}&=
\sum_{j=1}^{\ell_n}\frac{j}{\ell_n}
\sum_{i\geq j}a_ia_{i-j} =\sum_{j=1}^{\ell_n}\frac{j}{\ell_n}
\sum_{k\geq 0}a_{k+j}a_k\\
&\leq \sum_{j=1}^{\ell_n}\frac{j}{\ell_n}
\sum_{k\geq 0}a_{k+j}A\leq A\sum_{j=1}^{\ell_n}\frac{j}{\ell_n}j^{-5/2}\sum_{k\geq 0}\pr{k+j}^{5/2}a_{j+k}
\leq C/\sqrt{\ell_n} 
\end{align*}
and 
\begin{align*}
\sum_{i\geq 0}\sum_{j=\ell_n+1}^{\infty}\frac{j}{\ell_n}a_ia_{i-j}\mathbf{1}_{i\geq j}&=
\sum_{i\geq \ell_n+1}\sum_{j=\ell_n+1}^{i}\frac{j}{\ell_n}a_ia_{i-j}  
\leq \sum_{i\geq \ell_n+1}\frac{i}{\ell_n}\sum_{j=\ell_n+1}^{i} a_ia_{i-j} \\&=\sum_{i\geq \ell_n+1}a_i\frac{i}{\ell_n}\sum_{k=0}^{i-\ell_n-1} a_{k} 
 \leq A\sum_{i\geq \ell_n+1}a_i\frac{i}{\ell_n}\leq C\ell_n^{-5/2}.
\end{align*}
Consequently, 
\begin{equation*}
\sqrt{b_n}\abs{\Var{(Z_n)}-\sigma^2}
\leq C\sqrt{b_n/\ell_n} , 
\end{equation*}
and the convergence of $\sqrt{b_n}\abs{\sqrt{\Var\pr{Z_n}}-\sigma  }\to 0$ follows from 
$$
\sqrt{b_n}\abs{\sqrt{\Var\pr{Z_n}}-\sigma  }=\sqrt{b_n}\abs{\Var{(Z_n)}-\sigma^2}\frac{1}{\sqrt{\Var\pr{Z_n}}+\sigma}.
$$
 
For the second term of \eqref{eq:decomposition_centreing}, we can  apply Corollary~2.6 in \citet{MR3502600} to obtain the bound $$ \sqrt{b_n}\pr{\frac{\E{\abs{Z_n-Z'_n}}}{\sqrt{2\Var{(Z_n)}}}-\E{\abs{N}}}\leq C\sqrt{b_n}\ell_n^{1-p/2} \pr{\log\ell_n}^{p/2},$$
 provided the assumptions stated in Corollary~\ref{cor: replacing centring term bernoulli} hold.  Indeed, 
$Z_n-Z'_n$ can be expressed as
\begin{equation*}
 Z_n-Z'_n=\frac{1}{\sqrt{\ell_n}}  \sum_{t=1}^{\ell_n} 
\phi\pr{\pr{\varepsilon_{t-u},\varepsilon'_{t-u} }_{u\in\N} },
\end{equation*}
where $\pr{\varepsilon'_u}_{u\in\Z}$ is an independent copy of $\pr{\varepsilon_u}_{u\in\Z}$ 
and 
\begin{equation*}
 \phi\pr{\pr{x_u,y_u }_{u\in\N} }:=\sum_{k=1}^m\frac{\partial g}{\partial x_k} 
\pr{v_0}\pr{f^k\pr{ \pr{x_u}_{u\in\N}}-f^k\pr{ \pr{y_u}_{u\in\N}}  }.
\end{equation*}
\end{proof}

\subsection{Verification of Examples}
\label{subsec:Verification examples}

\begin{proof}[Details of Example \ref{Example: Hoelder of linear process}]
Since $\varphi$ is $\gamma$-H\"older continuous, there exists a constant $C$ such that 
for each $x,y\in\R$,  $\abs{\varphi\pr{x}-\varphi\pr{y}}\leq C\abs{x-y}^\gamma$. In 
particular, $\abs{\varphi\pr{x}}\leq C\abs{x}^\gamma+\abs{\varphi\pr{0}}$ and it follows 
that
\begin{align*}
 \abs{\varphi^k\pr{x}-\varphi^k\pr{y}}&=
 \abs{\varphi\pr{x}-\varphi\pr{y}}
 \abs{\sum_{j=0}^{k-1}\varphi\pr{x}^j\varphi\pr{y}^{k-j-1}  }\\
 &\leq \max\ens{C,\abs{\varphi\pr{0}}}^{k-1}\abs{\varphi\pr{x}-\varphi\pr{y}}
  \sum_{j=0}^{k-1}\pr{1+\abs{x}^\gamma}^j
  \pr{1+\abs{y}^\gamma}^{k-j-1}\\
 &\leq C'\abs{x-y}^\gamma
\pr{1+\abs{x}^\gamma+\abs{y}^\gamma}^{k-1}.
\end{align*}
  For a fixed $i$, we thus have
\begin{align*}
& \delta_i\pr{\pr{X_t^k}_{t\in\Z} } 
 =\norm{\varphi\Big(\sum_{ j \in\Z  }a_{j}\varepsilon_{-j}\Big)^k-\varphi\Big(\sum_{ j \in\Z  }a_{j}\varepsilon_{-j}^{*,i}\Big)^k}_2\\
 \leq & C'\norm{\abs{a_{-i}}^\gamma\abs{\varepsilon_i-\varepsilon'_i}^\gamma
 \pr{1+\abs{\sum_{j\in\Z,j\neq-i}a_j\varepsilon_{-j}+a_{-i}\varepsilon_i}^\gamma+\abs{\sum_{j\in\Z,j\neq 
-i}a_j\varepsilon_{-j}+a_{-i}\varepsilon'_i}^\gamma  }^{k-1} }_2 \\
\leq & C'\norm{\abs{a_{-i}}^\gamma\abs{\varepsilon_i-\varepsilon'_i}^\gamma
 \pr{1+\abs{ a_{-i}\varepsilon_i}^\gamma+\abs{ a_{-i}\varepsilon'_i}^\gamma  }^{k-1}
 }_2\\
 +&C'\norm{\abs{a_{-i}}^\gamma\abs{\varepsilon_i-\varepsilon'_i}^\gamma
 \pr{1+\abs{\sum_{j\in\Z,j\neq -i}a_j\varepsilon_{-j} }^\gamma+\abs{\sum_{j\in\Z,j\neq 
-i}a_j\varepsilon_{-j} }^\gamma  }^{k-1}}_2\\
 \leq & C'\Big(\norm{\abs{\varepsilon_0}^{k\gamma} }_2+\norm{\abs{\varepsilon_0}^{(k-1)\gamma} }_2 \cdot \norm{\abs{\varepsilon_0}^{\gamma} }_2\Big)\abs{a_{-i}}^{k\gamma} +C'\abs{a_{-i}}^\gamma\norm{\abs{\varepsilon_0}^{\gamma} }_2\\
 +& C'\norm{\abs{\varepsilon_0}^{\gamma} }_2\abs{a_{-i}}^\gamma  \norm{  \abs{\sum_{j\in\Z,j\neq -i}a_j\varepsilon_{-j}}^{\pr{k-1}\gamma}   }_2,
\end{align*}
where the second inequality follows by a combination of the triangle inequality for $\gamma\in(0,1]$ and the $c_r$-inequality, and the third one is due to the independence of $\sum_{j\in\Z,j\neq -i}a_j\varepsilon_{-j}$ and $\pr{\varepsilon_{i},\varepsilon_{i}'}$.

Recall that we have to check that $\sum_{i\in\Z} i^2 \delta_i\pr{\pr{X_t^k}_{t\in\Z} } <\infty$ for each $k=1, \ldots, m$. By assumption, $\E{\abs{\varepsilon_0}^{2m\gamma}}<\infty$ and $\sum_{i\in\Z} i^2 \abs{a_i}^\gamma<\infty$, such that it remains to show $\E{\abs{ \sum_{j\in\Z,j\neq -i}a_j\varepsilon_{-j}}^{2\gamma\pr{k-1}}}<\infty$. In case $2\gamma\pr{k-1}\leq 1$, we can simply 
employ $$\E{\abs{\sum_{j\in\Z,j\neq 
-i}a_j\varepsilon_{-j}}^{2\pr{k-1}\gamma}}\leq \sum_{j\in\Z,j\neq 
0}\abs{a_j}^{2\pr{k-1}\gamma}\E{\abs{\varepsilon_0}^{2\pr{k-1}\gamma}},$$ which is finite by assumption. If $
2\gamma\pr{k-1}\in (1,2]$, then the Von Bahr-Esseen inequality gives (up to a constant) the same upper bound. 
If $2\gamma\pr{k-1}>2$, Rosenthal's inequality yields
\begin{multline*}
 \E{\abs{\sum_{j\in\Z,j\neq -i}a_j\varepsilon_{i-j}}^{2\pr{k-1}\gamma}}
\leq C \sum_{j\in\Z,j\neq 0}\abs{a_j}^{2\pr{k-1}\gamma}\E{\abs{\varepsilon_{0}}^{2\pr{k-1}\gamma}}
+C \pr{\sum_{j\in\Z,j\neq 0}a_j^{2}\E{ \varepsilon_{0}^{2 }}  }^{\pr{k-1}\gamma}.
\end{multline*}
\end{proof}

\begin{proof}[Details of Example \ref{Example: Functions of Gaussian proc}]
 This is a consequence of the estimation of the physical dependence measure 
 in Example~3 on pages 5967-5968 of \citet{MR3256190} applied separately to each function 
$\varphi^k$ for $1\leq k\leq m$.
\end{proof}

\begin{proof}[Details of Example \ref{Example: Volterra}]
 In order to give a bound on $\delta_i((X_t^k)_{t\in\Z})$ for a fixed $i$ 
and a $k\in\ens{1,\dots,m}$, we decompose $X_0$ as follows: Set $X_0=\varepsilon_i Y_i+Z_i$, 
where 
\begin{equation*}
 Y_i:=\sum_{j'\in\Z,j'\neq -i}a_{-i,j'} \varepsilon_{-j'}
 +\sum_{j\in\Z,j\neq -i}a_{j,-i} \varepsilon_{-j}
\end{equation*}
\begin{equation*}
 Z_i:=\sum_{j,j'\in\Z,j\neq j',j\neq -i, j'\neq -i}a_{j,j'}\varepsilon_{-j}\varepsilon_{-j'}.
\end{equation*}
 Thus, 
 \begin{equation*}
 X_0^k-\pr{X_0^{*,i}}^k=\sum_{\ell=0}^k\binom k\ell \pr{\varepsilon_i^\ell 
 Y_i^\ell Z_i^{k-\ell} -\pr{\varepsilon'_i}^\ell 
 Y_i^\ell Z_i^{k-\ell} }
\end{equation*}
and since the term with index $0$ vanishes, we derive that 
\begin{align*}
 \delta_i((X_t^k)_{t\in\Z}) &=\norm{ X_0^k-\pr{X_0^{*,i}}^k}_2\leq 
 \sum_{\ell=1}^k\binom k\ell \norm{\pr{\varepsilon_i^\ell-\pr{\varepsilon'_i}^\ell } Y_i^\ell 
Z_i^{k-\ell} }_2\\
&\leq 2 \sum_{\ell=1}^k\binom k\ell \norm{\varepsilon_0^\ell  }_2 \norm{Y_i^\ell 
Z_i^{k-\ell} }_2
\leq 2
 \sum_{\ell=1}^k\binom k\ell \norm{\varepsilon_0  }_{2\ell}^\ell \norm{Y_i}_{2k}^\ell 
\norm{Z_i  }_{2k}^{k-\ell},
\end{align*}
where the second inequality is due to the  independence of $\pr{\varepsilon_i,\varepsilon'_i}$ and 
$\pr{Y_i,Z_i}$, and the third inequality follows from an application of H\"older's inequality 
with conjugate exponents $k/\ell$ and $k/\pr{k-\ell}$ for $\ell\leq k-1$.
Following the arguments given on pages 2376-2377 in \citet*{MR4166203}, we obtain
\begin{equation*}
 \norm{Z_i  }_{2k}  \leq C
 \sqrt{\sum_{j,j'\in\Z\setminus\ens{-i},j\neq j' }a_{j,j'}^2}\norm{\varepsilon_0}_{2k}
 \leq C
 \sqrt{\sum_{j,j'\in\Z, j\neq j' }a_{j,j'}^2}\norm{\varepsilon_0}_{2k},
\end{equation*}
such that 
$ \norm{Z_i  }_{2k} $ can be bounded independently of $i$.
Moreover, an application of Rosenthal's inequality yields
\begin{equation*}
 \norm{Y_i}_{2k}\leq C\sqrt{\sum_{j,j'\in\Z, j\neq j' }a_{j,j'}^2}\cdot\sqrt{\sum_{j\in\Z, j\neq -i}
 \pr{a_{-i,j}^2+a_{j,-i}^2 }}\norm{\varepsilon_0}_{2k}.
\end{equation*}
Thus, $\delta_{i}\pr{\pr{X_t^k}_{t\in\Z}}\leq C\sqrt{\sum_{j\in\Z, j \neq -i}
 \pr{a_{-i,j}^2+a_{j,-i}^2 }}$ and the result
follows.
\end{proof}

\begin{appendices}
\section{Appendix}

\subsection{Auxiliary results for functionals of i.i.d.\ sequences}
This appendix collects some auxiliary results for functionals of i.i.d.\ sequences, which we require for our proofs. We consider sequences $\pr{X_t}_{t\in\Z}$ of the form $X_t:=f\pr{\pr{\varepsilon_{t-u}}_{u\in\Z}   }$, where 
$f\colon\R^{\Z}\to\R$ is measurable, $\pr{\varepsilon_u}_{u\in \Z}$ is an i.i.d.\ sequence and $\E{X_t}=0$. Denote $\Fca_{M}^N:=\sigma\pr{\varepsilon_u,M\leq u\leq N}$.
To quantify the dependence, let 
$\pr{\varepsilon'_u}_{u\in\Z}$ denote an independent copy of  $\pr{\varepsilon_u}_{u\in\Z}$ and 
define 
\begin{equation*}
 \delta_{i}\pr{\pr{X_t}_{t\in\Z}}:=\norm{X_0-X_0^{*,i}}_2,
\end{equation*}
where $X_0^{*,i}=f\pr{ \pr{\varepsilon^{*,i}_{-u  }}_{u\in\Z} }$ and $\varepsilon^{*,i}_v=\varepsilon'_i$ if 
$v=i$ 
and $\varepsilon^{*,i}_v=\varepsilon_v$ otherwise.

We start by presenting a bound on the partial sum of $\pr{X_t}_{t\in\Z}$, which is a special case of Proposition~1 
in \citet{MR2988107}.
\begin{lemma}\label{lem:moments_stationary_sequence}
The following inequality holds for all $N\in \N$:
\begin{equation*}
 \norm{\sum_{t=1}^N X_t }_2\leq \sqrt{N}
 \sum_{i\in\Z}\delta_i\pr{\pr{X_t}_{t\in\Z}  }.
\end{equation*}

\end{lemma}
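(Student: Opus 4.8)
The plan is to decompose the partial sum into martingale differences by means of the projection operators associated with the natural filtration of the innovations, and then to combine their orthogonality with a coupling estimate that relates each projection to the physical dependence measure $\delta_i$. This is the classical device behind the cited Proposition~1 of \citet{MR2988107}.

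Concretely, for $j\in\Z$ I would set $P_jY:=\E{Y\mid\sigma\pr{\varepsilon_u,u\leq j}}-\E{Y\mid\sigma\pr{\varepsilon_u,u\leq j-1}}$. Since $\E{X_t}=0$ and $X_t\in L^2$, the martingale convergence theorem yields the $L^2$-convergent expansion $X_t=\sum_{j\in\Z}P_jX_t$. Substituting this into $\sum_{t=1}^N X_t$ and regrouping the resulting double sum according to the lag $i:=t-j$, I obtain $\sum_{t=1}^N X_t=\sum_{i\in\Z}V_i$ with $V_i:=\sum_{t=1}^N P_{t-i}X_t$, and the triangle inequality gives $\norm{\sum_{t=1}^N X_t}_2\leq\sum_{i\in\Z}\norm{V_i}_2$.

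The crucial point is that for each \emph{fixed} lag $i$ the summands $P_{t-i}X_t$, $1\leq t\leq N$, are martingale differences sitting at the pairwise distinct levels $t-i$ and are therefore mutually orthogonal: for $t<t'$ the variable $P_{t-i}X_t$ is $\sigma\pr{\varepsilon_u,u\leq t-i}$-measurable while $\E{P_{t'-i}X_{t'}\mid\sigma\pr{\varepsilon_u,u\leq t'-i-1}}=0$ and $t-i\leq t'-i-1$. Hence $\norm{V_i}_2^2=\sum_{t=1}^N\norm{P_{t-i}X_t}_2^2$, and by stationarity every term equals $\norm{P_0X_i}_2^2$, so that $\norm{V_i}_2=\sqrt N\,\norm{P_0X_i}_2$. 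This is exactly where the single factor $\sqrt N$ is produced.

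It then remains to bound $\norm{P_0X_i}_2$ by a dependence coefficient. Using that $X_i^{*,0}$ is obtained from $X_i$ by replacing $\varepsilon_0$ with an independent copy, one has the coupling identity $\E{X_i\mid\sigma\pr{\varepsilon_u,u\leq -1}}=\E{X_i^{*,0}\mid\sigma\pr{\varepsilon_u,u\leq 0}}$, whence $P_0X_i=\E{X_i-X_i^{*,0}\mid\sigma\pr{\varepsilon_u,u\leq 0}}$; the contraction property of conditional expectation together with stationarity then gives $\norm{P_0X_i}_2\leq\norm{X_i-X_i^{*,0}}_2=\delta_{-i}\pr{\pr{X_t}_{t\in\Z}}$. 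Summing over $i\in\Z$ and noting $\sum_{i}\delta_{-i}=\sum_i\delta_i$ yields the assertion. I expect the main obstacle to be purely organisational: keeping the two reindexings straight (first by level $j$ to get orthogonality across lags $i$, then by lag $i$ to get orthogonality across times $t$) and justifying the coupling identity carefully, so that precisely one power of $\sqrt N$ survives.
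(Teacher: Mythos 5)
Your proof is correct. The paper does not prove this lemma itself but quotes it as a special case of Proposition~1 in \citet{MR2988107}, and your argument --- expanding $X_t$ into the martingale differences $P_jX_t$, using orthogonality at fixed lag $i$ to extract the single factor $\sqrt{N}$, and bounding $\norm{P_0X_i}_2$ by $\delta_{-i}\pr{\pr{X_t}_{t\in\Z}}$ via the coupling identity and the contraction property of conditional expectation --- is precisely the standard proof of that proposition specialised to dimension one, so it matches the route the paper relies on.
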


\begin{lemma}\label{lem:conver_variance_UI}
 Suppose that $\sum_{i\in\Z}\delta_i\big(\pr{X_t}_{t\in\Z}\big)<\infty$. Then the sequence 
 $
  \big( \frac 1N\big(\sum_{t=1}^NX_t   \big)^2\big)_{N\geq 1}
 $
is uniformly integrable. Moreover, the
series $\sum_{t\in\Z}\abs{\cov{X_0}{X_t} }$ converges and 
\begin{equation*}
\lim_{N\to \infty}\frac 1N\E{\pr{\sum_{t=1}^NX_t }^2 }
= \sum_{t\in\Z} \operatorname{Cov} 
\pr{X_0,X_t} .
\end{equation*}
\end{lemma}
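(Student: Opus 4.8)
The plan is to treat the two covariance assertions via the orthogonal (martingale-difference) decomposition of $X_t$, and to obtain the uniform integrability by approximating with $m$-dependent sequences. Throughout, write $S_N:=\sum_{t=1}^N X_t$ and $\Delta:=\sum_{i\in\Z}\delta_i\pr{\pr{X_t}_{t\in\Z}}<\infty$. Let $\mathcal G_k:=\sigma\pr{\varepsilon_u,u\leq k}$ and $P_k\pr{\cdot}:=\E{\cdot\mid\mathcal G_k}-\E{\cdot\mid\mathcal G_{k-1}}$. Since $X_0\in L^2$ is $\sigma\pr{\varepsilon_u,u\in\Z}$-measurable with $\E{X_0}=0$, the martingale convergence theorem and the triviality of the remote past $\bigcap_k\mathcal G_k$ (Kolmogorov's zero-one law) give the orthogonal expansion $X_0=\sum_{k\in\Z}P_kX_0$ in $L^2$. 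The standard bound linking these projections to the physical dependence measure (see \citet{MR2172215}) reads $\norm{P_0X_j}_2\leq\delta_{-j}\pr{\pr{X_t}_{t\in\Z}}$; writing $b_j:=\norm{P_0X_j}_2$ and using stationarity, this yields $\sum_{j\in\Z}b_j\leq\Delta$.

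For the covariance series I would exploit orthogonality: as $\E{\pr{P_kX_0}\pr{P_lX_t}}=0$ for $k\neq l$, one has $\cov{X_0}{X_t}=\sum_{k\in\Z}\E{\pr{P_kX_0}\pr{P_kX_t}}$, so by Cauchy--Schwarz and stationarity $\abs{\cov{X_0}{X_t}}\leq\sum_{k\in\Z}b_{-k}b_{t-k}$. Summing over $t$ and interchanging summations gives $\sum_{t\in\Z}\abs{\cov{X_0}{X_t}}\leq\pr{\sum_{j\in\Z}b_j}^2\leq\Delta^2<\infty$, which is the claimed absolute convergence. The second-moment limit is then routine: expanding $\frac1N\E{S_N^2}=\sum_{\abs{j}<N}\pr{1-\abs{j}/N}\cov{X_0}{X_j}$ and applying dominated convergence with the summable dominating sequence $\pr{\abs{\cov{X_0}{X_j}}}_{j\in\Z}$ yields $\frac1N\E{S_N^2}\to\sum_{t\in\Z}\cov{X_0}{X_t}=:\sigma^2$.

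The uniform integrability is the delicate point, and I expect it to be the main obstacle. One cannot extract it from an $L^{1+\varepsilon}$ bound on $S_N^2/N$, since only a second-order dependence condition is assumed; it must instead come from asymptotic Gaussianity together with the sharp second-moment control just obtained. I would introduce the $\pr{2m+1}$-dependent sequence $X_t^{\pr{m}}:=\E{X_t\mid\Fca_{t-m}^{t+m}}$ and set $S_N^{\pr{m}}:=\sum_{t=1}^N X_t^{\pr{m}}$. For fixed $m$, the classical central limit theorem for stationary $m$-dependent sequences of finite variance gives that $S_N^{\pr{m}}/\sqrt N$ converges in distribution to a centred Gaussian law of variance $\tau_m^2$, while the finite range of the covariances yields the elementary convergence $\frac1N\E{\pr{S_N^{\pr{m}}}^2}\to\tau_m^2$. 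Since a sequence converging in distribution whose second moments converge to that of the Gaussian limit has uniformly integrable squares --- by the Skorokhod representation and Scheff\'e's lemma --- the family $\pr{\pr{S_N^{\pr{m}}}^2/N}_{N\geq 1}$ is uniformly integrable for each fixed $m$.

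It remains to transfer uniform integrability to $S_N$. Applying Lemma~\ref{lem:moments_stationary_sequence} to the centred sequence $X_t-X_t^{\pr{m}}$ gives $\sup_{N}\norm{\pr{S_N-S_N^{\pr{m}}}/\sqrt N}_2\leq\sum_{i\in\Z}\delta_i\pr{\pr{X_t-X_t^{\pr{m}}}_{t\in\Z}}=:\rho_m$, and since $\delta_i\pr{\pr{X_t-X_t^{\pr{m}}}_{t\in\Z}}\leq 2\delta_i\pr{\pr{X_t}_{t\in\Z}}$ with $X_t^{\pr{m}}\to X_t$ in $L^2$, dominated convergence gives $\rho_m\to0$. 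The final ingredient is a transfer lemma: if $W_N=Z_N^{\pr{m}}+R_N^{\pr{m}}$ with $\pr{\pr{Z_N^{\pr{m}}}^2}_{N}$ uniformly integrable for each $m$ and $\sup_N\norm{R_N^{\pr{m}}}_2\to0$, then $\pr{W_N^2}_N$ is uniformly integrable. This follows from $\1{W_N^2>K}\leq\1{\pr{Z_N^{\pr{m}}}^2>K/4}+\1{\pr{R_N^{\pr{m}}}^2>K/4}$, the estimate $\E{W_N^2\1{W_N^2>K}}\leq 2\E{\pr{Z_N^{\pr{m}}}^2\1{\pr{Z_N^{\pr{m}}}^2>K/4}}+2\E{\pr{Z_N^{\pr{m}}}^2\1{\pr{R_N^{\pr{m}}}^2>K/4}}+2\E{\pr{R_N^{\pr{m}}}^2}$, and $\PP\pr{\pr{R_N^{\pr{m}}}^2>K/4}\leq 4\rho_m^2/K$ combined with the uniform integrability of $\pr{\pr{Z_N^{\pr{m}}}^2}_N$; letting first $K\to\infty$ and then $m\to\infty$ gives the conclusion. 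Applying this with $W_N=S_N/\sqrt N$ and $Z_N^{\pr{m}}=S_N^{\pr{m}}/\sqrt N$ finishes the proof.
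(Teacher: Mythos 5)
Your proof is correct and follows essentially the same route as the paper's: both reduce the uniform integrability to the $M$-dependent approximation $X_t^{(M)}=\E{X_t\mid\Fca_{t-M}^{t+M}}$, control the approximation error through Lemma~\ref{lem:moments_stationary_sequence}, and let $M\to\infty$ by dominated convergence using $\delta_i\pr{\pr{X_t-X_t^{(M)}}_{t\in\Z}}\leq 2\delta_i\pr{\pr{X_t}_{t\in\Z}}$. The only differences are that you supply details the paper outsources: the absolute convergence of the covariance series is cited there from Proposition~2 of \citet{MR2988107} (your projection argument is precisely that proof), and the uniform integrability of $\big(\frac1N\big(\sum_{t=1}^N X_t^{(M)}\big)^2\big)_{N\geq 1}$ for fixed $M$ is merely asserted there, whereas you derive it from the $m$-dependent central limit theorem combined with Skorokhod representation and Scheff\'e's lemma.
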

\begin{proof}
 The convergence of the series is established in Proposition~2 of 
\citet{MR2988107}. It remains to check the uniform integrability of  $
  \big( \frac 1N\big(\sum_{t=1}^NX_t   \big)^2\big)_{N\geq 1} $. Since the sequence is bounded in $\mathbb L^1$, we only have to check
 that 
 \begin{equation*}
  \lim_{\delta\to 0}\sup_{A:\PP\pr{A}<\delta}\E{\frac 1N\pr{\sum_{t=1}^NX_t   
}^2\mathbf{1}_A }=0.
 \end{equation*}
 To do so, let $X_t^{\pr{M}}:=\E{X_t\mid \Fca_{t-M}^{t+M}}$. Then 
 \begin{align*}
 & \sup_{A:\PP\pr{A}<\delta}\E{\frac 1N\pr{\sum_{t=1}^NX_t}^2\mathbf{1}_A }\\
\leq &2\sup_{A:\PP\pr{A}<\delta}\E{\frac 
1N\pr{\sum_{t=1}^NX_t^{\pr{M}} 
}^2\mathbf{1}_A }+2 \E{\frac 1N\pr{\sum_{t=1}^N\pr{X_t-X_t^{\pr{M}}}   
}^2  }
 \end{align*}
and using Lemma~\ref{lem:moments_stationary_sequence}, we obtain
\begin{align*}
   \sup_{A:\PP\pr{A}<\delta}\E{\frac 1N\pr{\sum_{t=1}^NX_t  
}^2\mathbf{1}_A }\leq 
&  2\sup_{A:\PP\pr{A}<\delta}\E{\frac 
1N\pr{\sum_{t=1}^NX_t^{\pr{M}} 
}^2\mathbf{1}_A }\\
+ &2\pr{\sum_{i\in \Z}\delta_i\pr{\pr{X_t-\E{X_t\mid \Fca_{t-M}^{t+M}}}_{t\in\Z}} }^2.
\end{align*}
Employing the uniform integrability of $
  \big( \frac 1N\big(\sum_{t=1}^NX_t^{(M)}   \big)^2\big)_{N\geq 1}
 $ for any fixed $M$, we derive that
\begin{equation*}
 \limsup_{\delta\to 0}\sup_{A:\PP\pr{A}<\delta}\E{\frac 1N\pr{\sum_{t=1}^NX_t
}^2\mathbf{1}_A }\leq 2\pr{\sum_{i\in \Z}\delta_i\pr{\pr{X_t-\E{X_t\mid \Fca_{t-M}^{t+M}}}_{t\in\Z}} }^2.
\end{equation*}
Since $\delta_i\pr{\pr{X_t-\E{X_t\mid \Fca_{t-M}^{t+M}}}_{t\in\Z}}\leq 2\delta_i\pr{\pr{X_t }_{t\in\Z}}$, we conclude by an application of the dominated convergence theorem.
\end{proof}

There moreover holds a central limit theorem for $\pr{X_t}_{t\in\Z}$ (see Theorem~1 in \citet{MR2988107}):

\begin{lemma}\label{lem:TLC_suite_bernoullienne}
 Suppose that $\sum_{i\in\Z}\delta_i\pr{\pr{X_t}_{t\in\Z}}<\infty$. Then the following convergence in distribution holds
\begin{equation*}
 \frac 1{\sqrt{n}}\sum_{t=1}^nX_t\rightarrow N\pr{0,\sigma^2}, 
\end{equation*}
where 
\begin{equation*}
 \sigma^2=\sum_{t\in\Z} \cov{X_0}{X_t}.
\end{equation*}
\end{lemma}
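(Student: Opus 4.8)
The plan is to prove this via the classical $m$-dependent approximation, using exactly the scheme that underlies the proof of Theorem~\ref{thm:Bernoulli}. For each fixed $M\geq 1$, set $X_t^{\pr{M}}:=\E{X_t\mid \Fca_{t-M}^{t+M}}$ and observe that $\pr{X_t^{\pr{M}}}_{t\in\Z}$ is a stationary, centred, $2M$-dependent sequence, since $X_s^{\pr{M}}$ and $X_t^{\pr{M}}$ are measurable with respect to disjoint blocks of innovations whenever $\abs{s-t}>2M$. I would then deduce the claim from Theorem~4.2 in \citet{MR0233396} by assembling three ingredients: (i) for each fixed $M$ the normalised partial sums of the $m$-dependent sequence are asymptotically Gaussian; (ii) the associated variances converge to $\sigma^2$ as $M\to\infty$; and (iii) the approximation error is negligible, uniformly in $n$.

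For (i), I would invoke the Hoeffding--Robbins central limit theorem for stationary $m$-dependent sequences, which applies because $X_0^{\pr{M}}\in\mathbb L^2$, and gives
$$\frac{1}{\sqrt{n}}\sum_{t=1}^n X_t^{\pr{M}}\claw \mathcal N\pr{0,\sigma_M^2},\qquad \sigma_M^2:=\sum_{\abs{t}\leq 2M}\cov{X_0^{\pr{M}}}{X_t^{\pr{M}}}.$$
Writing $X_0^{\pr{M}}-\pr{X_0^{\pr{M}}}^{*,i}=\E{X_0-X_0^{*,i}\mid \Fca_{-M}^M,\varepsilon'_i}$ and using that conditional expectation is an $\mathbb L^2$-contraction, one checks $\delta_i\pr{\pr{X_t^{\pr{M}}}_{t\in\Z}}\leq \delta_i\pr{\pr{X_t}_{t\in\Z}}$, so $\pr{X_t^{\pr{M}}}_{t\in\Z}$ again satisfies the summability hypothesis and Lemma~\ref{lem:conver_variance_UI} identifies $\sigma_M^2$ as its long-run variance.

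For (ii) and (iii) I would use the single mechanism already exploited in the proof of Lemma~\ref{lem:conv_of_N_M}. Decomposing $\cov{X_0}{X_t}-\cov{X_0^{\pr{M}}}{X_t^{\pr{M}}}=\cov{X_0-X_0^{\pr{M}}}{X_t}+\cov{X_0^{\pr{M}}}{X_t-X_t^{\pr{M}}}$ and summing over $t$, Lemma~\ref{lem:moments_stationary_sequence} bounds $\abs{\sigma^2-\sigma_M^2}$ by a constant times $\sum_{i\in\Z}\delta_i\pr{\pr{X_t-X_t^{\pr{M}}}_{t\in\Z}}$; since $\delta_i\pr{\pr{X_t-X_t^{\pr{M}}}_{t\in\Z}}\leq 2\delta_i\pr{\pr{X_t}_{t\in\Z}}$ is summable and each summand vanishes as $M\to\infty$ by the martingale convergence theorem, dominated convergence yields $\sigma_M^2\to\sigma^2$. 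For the negligibility, the same Lemma~\ref{lem:moments_stationary_sequence} provides an $n$-free estimate
$$\norm{\frac{1}{\sqrt{n}}\sum_{t=1}^n\pr{X_t-X_t^{\pr{M}}}}_2\leq \sum_{i\in\Z}\delta_i\pr{\pr{X_t-X_t^{\pr{M}}}_{t\in\Z}},$$
so that by Chebyshev's inequality and the very same dominated-convergence argument, $\lim_{M\to\infty}\limsup_{n\to\infty}\PP\pr{\abs{n^{-1/2}\sum_{t=1}^n\pr{X_t-X_t^{\pr{M}}}}>\varepsilon}=0$ for every $\varepsilon>0$. Combining (i)--(iii) through Theorem~4.2 in \citet{MR0233396} gives the assertion, with $\sigma^2$ finite by Lemma~\ref{lem:conver_variance_UI}.

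I expect the main obstacle to be the uniform-in-$n$ control in step (iii): it is precisely this $n$-independent $\mathbb L^2$ bound, made available by the summability $\sum_{i\in\Z}\delta_i\pr{\pr{X_t}_{t\in\Z}}<\infty$ through Lemma~\ref{lem:moments_stationary_sequence}, that legitimises interchanging the limits $M\to\infty$ and $n\to\infty$. By contrast, the $m$-dependent central limit theorem in (i) is classical and the variance convergence in (ii) is a routine dominated-convergence computation.
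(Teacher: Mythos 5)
Your proposal is correct, but it is worth pointing out that the paper does not prove this lemma at all: it is quoted directly as Theorem~1 of \citet{MR2988107}, so there is no internal proof to compare against. What you have written is essentially the standard proof of that cited result, and it is consistent with the machinery the paper uses elsewhere: the $2M$-dependent approximation $X_t^{(M)}=\E{X_t\mid\Fca_{t-M}^{t+M}}$, the contraction bound $\delta_i\pr{\pr{X_t^{(M)}}_{t\in\Z}}\leq\delta_i\pr{\pr{X_t}_{t\in\Z}}$, the $n$-free $\mathbb L^2$ estimate from Lemma~\ref{lem:moments_stationary_sequence} applied to $\pr{X_t-X_t^{(M)}}_{t\in\Z}$, the martingale-convergence-plus-dominated-convergence argument for $M\to\infty$, and the passage to the limit via Theorem~4.2 of \citet{MR0233396} are all steps the authors themselves carry out in the proofs of Lemmas~\ref{lem:conv_of_N_M}, \ref{lem:approximation_by_m_dep}, \ref{lem:conver_variance_UI} and \ref{lem:norm_approx_partial_sums}. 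Two minor points. First, in step (i) you should be explicit that the version of the $m$-dependent CLT you need is the one for stationary sequences under finite second moments only (the original Hoeffding--Robbins statement assumes third moments; the $\mathbb L^2$ version is classical, e.g.\ via the big-block/small-block method), and that a degenerate limit $\sigma_M^2=0$ causes no harm. Second, in step (ii) the covariance-by-covariance decomposition still requires a bound on sums of cross-covariances of two jointly stationary functionals; it is cleaner to argue as the paper does in Lemma~\ref{lem:conv_of_N_M}, writing $\abs{\sigma^2-\sigma_M^2}=\lim_N N^{-1}\abs{\E{S_N^2}-\E{(S_N^{(M)})^2}}$ via Lemma~\ref{lem:conver_variance_UI} and then factoring $\E{S_N^2}-\E{(S_N^{(M)})^2}=\E{(S_N-S_N^{(M)})(S_N+S_N^{(M)})}$, so that Cauchy--Schwarz and Lemma~\ref{lem:moments_stationary_sequence} give the bound $2\pr{\sum_{i\in\Z}\delta_i\pr{\pr{X_t}_{t\in\Z}}}\sum_{i\in\Z}\delta_i\pr{\pr{X_t-X_t^{(M)}}_{t\in\Z}}$, which vanishes as $M\to\infty$ exactly as you describe. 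With these small repairs the argument is complete.
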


We also require an estimate on the $\mathbb L^2$-norm of
partial sums of $\big(X_t-\E{X_t\mid \mathcal{F}_{t-M}^{t+M}}\big)_{t\geq 1}$ for some $M\in\N$. 
\begin{lemma}\label{lem:norm_approx_partial_sums}
It holds
 \begin{equation*}
  \norm{\sum_{t=1}^N\pr{X_t-\E{X_t\mid \mathcal{F}_{t-M}^{t+M}  }}}_2\leq (4M+3)
  \sqrt{N}\sum_{i:\abs{i}\geq M}\delta_i\pr{ \pr{X_t}_{t\in\Z}  }.
 \end{equation*}
\end{lemma}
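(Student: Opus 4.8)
The plan is to apply the partial-sum bound of Lemma~\ref{lem:moments_stationary_sequence} to the centred stationary sequence $Y_t:=X_t-\E{X_t\mid\Fca_{t-M}^{t+M}}$, which is again a functional of $\pr{\varepsilon_u}_{u\in\Z}$. This reduces the assertion to the estimate $\sum_{i\in\Z}\delta_i\pr{\pr{Y_t}_{t\in\Z}}\leq(4M+3)\sum_{i:\abs{i}\geq M}\delta_i\pr{\pr{X_t}_{t\in\Z}}$. The point to watch is that a naive coordinatewise bound only gives $\delta_i\pr{\pr{Y_t}_{t\in\Z}}\leq 3\delta_i\pr{\pr{X_t}_{t\in\Z}}$, which is useless for $\abs{i}\leq M$ because it reintroduces precisely the near-diagonal coefficients the statement excludes. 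The crux will be to show that, once the local conditional expectation is removed, the sensitivity of $Y_0$ to a \emph{nearby} innovation $\varepsilon_i$ is controlled by the sensitivity of $X_0$ to the \emph{far} innovations.

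First I would dispose of the range $\abs{i}>M$. There $\E{X_0\mid\Fca_{-M}^M}$ does not depend on $\varepsilon_i$, hence it is unchanged by the coupling defining $X_0^{*,i}$; therefore $Y_0-Y_0^{*,i}=X_0-X_0^{*,i}$ and $\delta_i\pr{\pr{Y_t}_{t\in\Z}}=\delta_i\pr{\pr{X_t}_{t\in\Z}}$.

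For $\abs{i}\leq M$ I would use a martingale telescoping. Since $\E{X_0\mid\Fca_{-L}^L}\to X_0$ in $\mathbb L^2$ as $L\to\infty$, one may write $Y_0=\sum_{L\geq M}D_{0,L}$ with $D_{0,L}:=\E{X_0\mid\Fca_{-L-1}^{L+1}}-\E{X_0\mid\Fca_{-L}^L}$, and the triangle inequality in $\mathbb L^2$ gives $\delta_i\pr{\pr{Y_t}_{t\in\Z}}\leq\sum_{L\geq M}\delta_i\pr{\pr{D_{t,L}}_{t\in\Z}}$. The key elementary fact is that enlarging the conditioning field by a single innovation can be written through the coupling: if $\mathcal{H}'=\mathcal{H}\vee\sigma(\varepsilon_j)$ with $\varepsilon_j\notin\mathcal{H}$, then $\E{Z\mid\mathcal{H}'}-\E{Z\mid\mathcal{H}}=\E{Z-Z^{*,j}\mid\mathcal{H}'}$, since $\E{Z^{*,j}\mid\mathcal{H}'}=\E{Z^{*,j}\mid\mathcal{H}}=\E{Z\mid\mathcal{H}}$. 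Splitting $D_{0,L}$ into the step that adds $\varepsilon_{L+1}$ and the step that adds $\varepsilon_{-L-1}$ and using that conditional expectation is an $\mathbb L^2$-contraction then yields $\norm{D_{0,L}}_2\leq\delta_{L+1}\pr{\pr{X_t}_{t\in\Z}}+\delta_{-L-1}\pr{\pr{X_t}_{t\in\Z}}$. As the coupling preserves the $\mathbb L^2$-norm, $\delta_i\pr{\pr{D_{t,L}}_{t\in\Z}}=\norm{D_{0,L}-D_{0,L}^{*,i}}_2\leq 2\norm{D_{0,L}}_2$, so that for each fixed $i$ with $\abs{i}\leq M$, summing over $L\geq M$ gives
\begin{equation*}
\delta_i\pr{\pr{Y_t}_{t\in\Z}}\leq 2\sum_{L\geq M}\pr{\delta_{L+1}\pr{\pr{X_t}_{t\in\Z}}+\delta_{-L-1}\pr{\pr{X_t}_{t\in\Z}}}=2\sum_{j:\abs{j}\geq M+1}\delta_j\pr{\pr{X_t}_{t\in\Z}}.
\end{equation*}

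It then remains to sum over $i$. The $2M+1$ indices with $\abs{i}\leq M$ each contribute at most $2\sum_{\abs{j}\geq M+1}\delta_j\pr{\pr{X_t}_{t\in\Z}}$, while the indices with $\abs{i}\geq M+1$ together contribute $\sum_{\abs{i}\geq M+1}\delta_i\pr{\pr{X_t}_{t\in\Z}}$; hence $\sum_{i\in\Z}\delta_i\pr{\pr{Y_t}_{t\in\Z}}\leq\pr{1+2\pr{2M+1}}\sum_{\abs{i}\geq M+1}\delta_i\pr{\pr{X_t}_{t\in\Z}}=(4M+3)\sum_{\abs{i}\geq M+1}\delta_i\pr{\pr{X_t}_{t\in\Z}}\leq(4M+3)\sum_{\abs{i}\geq M}\delta_i\pr{\pr{X_t}_{t\in\Z}}$, and Lemma~\ref{lem:moments_stationary_sequence} finishes the proof. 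I expect the main obstacle to be the order of summation in the third step: the telescoping increments must be summed over the window size $L$ \emph{for each fixed $i$}, the point being that each near-diagonal index $i$ only ever sees far coefficients $\delta_{L+1},\delta_{-L-1}$ with $L\geq M$; summing over $i$ first would instead produce weights growing like $\abs{i}$ and destroy the constant $4M+3$.
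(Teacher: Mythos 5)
Your proof is correct and follows essentially the same route as the paper's: apply Lemma~\ref{lem:moments_stationary_sequence} to $X_t-\E{X_t\mid\Fca_{t-M}^{t+M}}$, keep $\delta_i$ unchanged for $\abs{i}>M$, and for $\abs{i}\leq M$ reduce to the tail $\sum_{\abs{j}\geq M+1}\delta_j$ via single-innovation conditional-expectation increments bounded by $\delta_{L+1}+\delta_{-L-1}$. The only cosmetic difference is that you sum the increments with the triangle inequality in $\mathbb L^2$, whereas the paper uses orthogonality of the martingale increments followed by the $\ell^1$--$\ell^2$ comparison; both yield the same constant $4M+3$.
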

\begin{proof}
Lemma~\ref{lem:moments_stationary_sequence} applied to
$X_t-\E{X_t\mid \mathcal{F}_{t-M}^{t+M}  }$ yields
 \begin{equation*}
  \norm{\sum_{t=1}^N\pr{X_t-\E{X_t\mid \mathcal{F}_{t-M}^{t+M}  }}  }_2\leq
  \sqrt{N}
  \sum_{i\in\Z}\delta_i\pr{\pr{X_t-\E{X_t\mid \mathcal{F}_{t-M}^{t+M}  }}_{t\in\Z}  }.
 \end{equation*}
For $\abs{i}\geq M+1$, we use $\delta_i\pr{\pr{X_t-\E{X_t\mid \mathcal{F}_{t-M}^{t+M}  }}_{t\in\Z}  }\leq \delta_i\pr{\pr{X_t }_{t\in\Z}  }$, whereas for $\abs{i}\leq M$, we
employ $\delta_i\pr{\pr{X_t-\E{X_t\mid \mathcal{F}_{t-M}^{t+M}  }}_{t\in\Z}  } \leq 2\norm{X_0-\E{X_0\mid \mathcal{F}_{-M}^{M}  }}_2, $  thus obtaining 
  \begin{align}\label{eq: Lemma Appendix Approx}
  &\norm{\sum_{t=1}^N\pr{X_t-\E{X_t\mid \mathcal{F}_{t-M}^{t+M}  }}  }_2\\
\nonumber  \leq &   2\pr{2M+1}\sqrt{N}   \norm{X_0-\E{X_0\mid \mathcal{F}_{-M}^{M}  }}_2+  \sqrt{N}  \sum_{i\in\Z:\abs{i}\geq M+1}\delta_i\pr{\pr{X_t }_{t\in\Z}  }.
 \end{align}
In the following, we derive an upper bound for the first of the above terms.  By the martingale convergence theorem, it holds
 \begin{equation*}
   \norm{X_0-\E{X_0\mid \mathcal{F}_{-M}^{M}  }}_2^2
   \leq \sum_{i\geq M+1}
   \norm{\E{X_0\mid \mathcal{F}_{-i}^i}-\E{X_0\mid \mathcal{F}_{-(i-1)}^{i-1}}  }_2^2.
 \end{equation*}
Moreover,  
 \begin{align*}
 &  \norm{\E{X_0\mid \mathcal{F}_{-i}^i}-\E{X_0\mid \mathcal{F}_{-(i-1)}^{i-1}}  }_2 \\
   \leq &  \norm{\E{X_0\mid \mathcal{F}_{-i}^i}-\E{X_0^{*,i}\mid \mathcal{F}_{-i}^i}}_2+\norm{\E{X_0^{*,i}\mid \mathcal{F}_{-i}^i}-\E{X_0\mid \mathcal{F}_{-(i-1)}^{i-1}}  }_2\\
\leq &  \delta_i\pr{\pr{X_t}_{t\in\Z}}+  \delta_{-i}\pr{\pr{X_t}_{t\in\Z}},
 \end{align*}
 where the second inequality follows from 
  \begin{equation*}
  \E{X_0^{*,i}\mid \mathcal{F}_{-i}^i }
 =\E{X_0^{*,i}\mid  \mathcal{F}_{-i}^{i-1}}
 =\E{X_0 \mid \mathcal{F}_{-i}^{i-1}}
 \end{equation*}
 combined with 
   \begin{equation*}
  \E{X_0 \mid   \mathcal{F}_{-(i-1)}^{i-1}}
   =\E{X_0^{*,i}\mid   \mathcal{F}_{-(i-1)}^{i-1}}
   =\E{X_0^{*,-i}\mid   \mathcal{F}_{-i}^{i-1} }.
 \end{equation*}
By the comparison of the $\ell^1$- and $\ell^2$-norm, we thus have 
 \begin{equation*}
 \norm{X_0-\E{X_0\mid \mathcal{F}_{-M}^{M}  }}_2\leq  \sum_{i:\abs{i}\geq M+1} \delta_i\pr{\pr{X_t}_{t\in\Z}}.
 \end{equation*}
Inserting the above bound into \eqref{eq: Lemma Appendix Approx} concludes the proof.
\end{proof}

\subsection{A moment inequality for $U$-statistics}

\begin{lemma}\label{lem:moment_inequality_U_stats}
Let $\pr{\varepsilon_u}_{u\in\Z}$ be an i.i.d.\ sequence.
Let $M\geq 0$ and $\ell >2M$  be integers. Define the random vectors $V_j$ by
$V_j:=\pr{\varepsilon_u}_{u=j\ell+1-M}^{\pr{j+1}\ell+M}$. Let $h\colon
\R^2\to \R$ be a Lipschitz-continuous function,
let $f_1,f_2\colon \R^{\ell+2M}\to\R$ be measurable
functions and let $U_N$ be defined by
\begin{equation*}
 U_N:=\sum_{1\leq j<k\leq N}\pr{
 h\pr{f_1\pr{V_j},f_1\pr{V_k}}
 - h\pr{f_2\pr{V_j},f_2\pr{V_k}}}.
\end{equation*}
Then the following inequality holds
\begin{equation*}
 N^{-3/2}\norm{U_N-\E{U_N}}_2\leq C
 \norm{f_1\pr{V_0}-f_2\pr{V_0} }_2,
\end{equation*}
where $C$ is a constant depending only on $h$.
\end{lemma}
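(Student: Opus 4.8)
The plan is to work with the single kernel $g(x,y):=h\pr{f_1(x),f_1(y)}-h\pr{f_2(x),f_2(y)}$, so that $U_N=\sum_{1\le j<k\le N}g(V_j,V_k)$, and to estimate $\norm{U_N-\E{U_N}}_2^2$ directly by expanding it into a sum of covariances and exploiting the fact that the blocks $V_j$ are only locally dependent. First I would record the pointwise bound coming from Lipschitz-continuity: writing $L$ for the Lipschitz constant of $h$ and $\rho(x):=\abs{f_1(x)-f_2(x)}$, inserting the intermediate term $h\pr{f_2(x),f_1(y)}$ gives $\abs{g(x,y)}\le L\pr{\rho(x)+\rho(y)}$. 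By stationarity of $\pr{\varepsilon_u}_{u\in\Z}$ every $V_j$ has the distribution of $V_0$, so $\norm{g(V_j,V_k)}_2\le 2L\norm{f_1(V_0)-f_2(V_0)}_2$; in particular each centred summand obeys $\norm{g(V_j,V_k)-\E{g(V_j,V_k)}}_2\le 2L\norm{f_1(V_0)-f_2(V_0)}_2$. (If the right-hand side of the claimed inequality is infinite there is nothing to prove, so I may assume $f_1(V_0)-f_2(V_0)\in\mathbb L^2$, which also guarantees integrability of all the terms involved.)

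The key observation is that $g(V_j,V_k)$ is measurable with respect to $\sigma\pr{\varepsilon_u:u\in I_j\cup I_k}$, where $I_a:=\ens{a\ell+1-M,\ldots,(a+1)\ell+M}$ is the index range of $V_a$. Because $\ell>2M$, two such ranges $I_a$ and $I_b$ are disjoint as soon as $\abs{a-b}\ge 2$; hence, by independence of the $\varepsilon_u$'s, $\cov{g(V_j,V_k)}{g(V_{j'},V_{k'})}=0$ unless the pairs $\ens{j,k}$ and $\ens{j',k'}$ are \emph{entangled}, meaning that at least one of the four differences $\abs{j-j'},\abs{j-k'},\abs{k-j'},\abs{k-k'}$ is at most $1$. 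I would then bound every nonzero covariance by the Cauchy--Schwarz estimate $4L^2\norm{f_1(V_0)-f_2(V_0)}_2^2$ and count the entangled quadruples: fixing $(j,k)$ costs at most $N^2$ choices, after which one of $j',k'$ must lie in the set of at most six indices within distance one of $\ens{j,k}$ while the other ranges over at most $N$ values, giving at most $12N^3$ entangled quadruples in total.

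Combining these two ingredients yields $\norm{U_N-\E{U_N}}_2^2\le 12N^3\cdot 4L^2\norm{f_1(V_0)-f_2(V_0)}_2^2$, and taking square roots and dividing by $N^{3/2}$ produces the asserted inequality with $C=4\sqrt{3}\,L$, a constant depending only on $h$ through its Lipschitz constant.

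I expect the only delicate point to be the bookkeeping in the two middle steps: checking that $\ell>2M$ really forces $I_a\cap I_b=\emptyset$ whenever $\abs{a-b}\ge 2$, so that the covariance genuinely vanishes for non-entangled pairs, and then carrying out the counting carefully enough that the number of surviving quadruples is $O(N^3)$ rather than the trivial $O(N^4)$. This $N^3$ scaling is exactly what matches the normalisation $N^{-3/2}$; heuristically it reflects that the dominant contribution comes from the linear (H\'ajek) projection of the kernel, while the degenerate part would contribute only $O(N^2)$.
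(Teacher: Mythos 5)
Your proof is correct, but it follows a genuinely different route from the paper's. You compute $\norm{U_N-\E{U_N}}_2^2$ directly as a sum of covariances of the centred summands $g\pr{V_j,V_k}-\E{g\pr{V_j,V_k}}$, kill all covariances for which the index pairs $\ens{j,k}$ and $\ens{j',k'}$ are separated by at least $2$ (which is legitimate: since $\ell>2M$ the index ranges $I_a$ and $I_b$ of $V_a$ and $V_b$ are disjoint once $\abs{a-b}\geq 2$, so the corresponding kernel evaluations are genuinely independent), bound the $O\pr{N^3}$ surviving covariances by Cauchy--Schwarz together with the pointwise estimate $\abs{g\pr{x,y}}\leq L\pr{\abs{f_1\pr{x}-f_2\pr{x}}+\abs{f_1\pr{y}-f_2\pr{y}}}$, and conclude with an explicit constant $4\sqrt 3\,L$. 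The paper instead decomposes $U_N-\E{U_N}$ into martingale-difference blocks over even and odd indices with respect to the filtration generated by $\pr{V_{k'}}_{k'\leq k}$, uses orthogonality of the martingale differences, and handles separately a two-dependent stationary remainder; this yields the same $N^{3/2}$ rate in the intermediate form $\norm{U_N-\E{U_N}}_2\leq CN^{3/2}\sup_{k\geq 1}\norm{H_{0,k}}_2$ before the Lipschitz property of $h$ is invoked. Your covariance-counting argument is more elementary and shorter, and it exploits the one-dependence of the blocks in exactly the place where the paper exploits it via conditioning; the paper's formulation is marginally more general in that the $N^{3/2}$ bound is first obtained in terms of $\sup_k\norm{H_{0,k}}_2$ alone, but for the statement as given both arguments deliver the same conclusion with a constant depending only on the Lipschitz constant of $h$.
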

\begin{proof}
 The difficulty here lies in the fact that the  vectors $V_j, j\geq 1$, are not independent.
 Denote $$H_{j,k}:=h\pr{f_1\pr{V_j},f_1\pr{V_k}} - h\pr{f_2\pr{V_j},f_2\pr{V_k}}.$$
 We will prove the inequality
 \begin{equation*}
 \label{eq:moment_inequality_U_stats_pair}
  \norm{U_{N}-\E{U_{N}}}_2\leq CN^{3/2}\sup_{k\geq 1}\norm{H_{0,k}}_2,
 \end{equation*}
 from which the assertion then follows by the Lipschitz-continuity of $h$. To verify the above inequality, we will distinguish between the cases where $N$ is even and those where $N$ is odd.
 Let us first consider even values of $N$, in which case we can write $2N$ instead of $N$. Denote
 by $\Fca_k$ the $\sigma$-algebra generated by the
 random variables $V_{k'}$ for $k'\leq k$.
 Then it holds
 \begin{align*}
 & \norm{U_{2N}-\E{U_{2N}}}_2 
  =
  \norm{\sum_{k=2}^{2N}\sum_{j=1}^{k-1}
  \pr{H_{j,k}-\E{H_{j,k}\mid \Fca_{k-2}}
  } +\sum_{k=2}^{2N}\sum_{j=1}^{k-1}\pr{
   \E{H_{j,k}\mid \Fca_{k-2}}-
   \E{H_{j,k}} }}_2\\
\leq &   \norm{\sum_{i=1}^{N}\sum_{j=1}^{2i-1}
  \pr{H_{j,2i}-\E{H_{j,2i}\mid \Fca_{2i-2}}
  }}_2
  +\norm{\sum_{i=1}^{N-1}\sum_{j=1}^{2i}
  \pr{H_{j,2i+1}-\E{H_{j,2i+1}\mid \Fca_{2i+1-2}}
  }}_2\\
  &+\norm{\sum_{k=2}^{2N}\sum_{j=1}^{k-2}\pr{
   \E{H_{j,k}\mid \Fca_{k-2}}-
   \E{H_{j,k}}
  }}_2
  +\norm{\sum_{k=2}^{2N} \pr{
   \E{H_{k-1,k}\mid \Fca_{k-2}}-
   \E{H_{k-1,k}}
  }}_2.
 \end{align*}
  For the first two terms, we additionally define $d_i:=
\sum_{j=1}^{2i-1}
  \pr{H_{j,2i}-\E{H_{j,2i}\mid \Fca_{2i-2}}
  }$ and $d'_i:= \sum_{j=1}^{2i}
  \pr{H_{j,2i+1}-\E{H_{j,2i+1}\mid \Fca_{2i+1-2}}
  }$, such that the sequences $\pr{d_i,\Fca_{2i}}_{i\geq 1}$ and
  $\pr{d'_i,\Fca_{2i+1}}_{i\geq 1}$ are martingale
  differences. For the third term, we use the independence between $V_k$ and $\Fca_{k-2}$ to get  that
  $\E{H_{j,k}\mid \Fca_{k-2}}=
  \E{H_{j,-1}\mid V_j}$, and we have to bound
  the moments of a two-dependent identically distributed
centred sequence. For the fourth term, we simply use the 
triangle inequality. 
By orthogonality of $\pr{d_i}_{i\geq 1}$ and orthogonality of $\pr{d'_i}_{i\geq 1}$, it follows
 \begin{align*}\label{eq:bound_moment_order_two}
  \norm{U_{2N}-\E{U_{2N}}}_2 
  \leq
&  \pr{\sum_{i=1}^{N}\norm{\sum_{j=1}^{2i-1}
  \pr{H_{j,2i}-\E{H_{j,2i}\mid \Fca_{2i-2}}
  }}_2^2}^{1/2}\\
 & +\pr{\sum_{i=1}^{N-1}\norm{\sum_{j=1}^{2i}
  \pr{H_{j,2i+1}-\E{H_{j,2i+1}\mid \Fca_{2i+1-2}}
  }}_2^2}^{1/2}\\
 & +\norm{\sum_{k=2}^{2N}\sum_{j=1}^{k-2}\pr{
   \E{H_{j,-1}\mid V_j}-
   \E{H_{j,-1}
  }}}_2
  +4N\sup_{k\geq 1}\norm{H_{0,k}     }_2.
 \end{align*}
The first of the above terms can be further bounded via
\begin{align*}
&\pr{\sum_{i=1}^{N} \norm{\sum_{j=1}^{2i-1} \pr{H_{j,2i}-\E{H_{j,2i}\mid \Fca_{2i-2}} }}_2^2}^{1/2}
 \leq\pr{\sum_{i=1}^{N}\pr{  \sum_{j=1}^{2i-1}\norm{  \pr{H_{j,2i}-\E{H_{j,2i}\mid \Fca_{2i-2}}}}_2  }^2}^{1/2}\\
 &\leq\pr{\sum_{i=1}^{N} 4\pr{  \sum_{j=1}^{2i-1}\norm{ H_{j,2i}}_2  }^2}^{1/2} 
  \leq \pr{\sum_{i=1}^{N}16 i^2\sup_{k\geq 1}\norm{H_{0,k}     }_2^2}^{1/2}
 \leq 4 N^{3/2}\sup_{k\geq 1}\norm{H_{0,k}}_2.
\end{align*}
The second term can be treated analogously. In order to
bound the third term, we switch the sums over
$j$ and $k$ to obtain

  \begin{align*}
 & \norm{\sum_{k=2}^{2N}\sum_{j=1}^{k-2}\pr{ \E{H_{j,-1}\mid V_j}- \E{H_{j,-1}}}}_2\\
 = & \norm{\sum_{j=1}^{2N-2}\pr{2N-j+1}(\E{H_{j,-1}\mid V_j}-
   \E{H_{j,-1}})}_2\\
   \leq & \norm{\sum_{i=1}^{N-1}\pr{2N-2i+1}(\E{H_{2i,-1}\mid V_{2i}}-
   \E{H_{2i,-1}})}_2\\
   &+\norm{\sum_{i=1}^{N-1}\pr{2N-\pr{2i-1}+1}
   (\E{H_{2i-1,-1}\mid V_{2i-1}}-
   \E{H_{2i-1,-1}})}_2\\
   =& \sqrt{ \sum_{i=1}^{N-1}\pr{2N-2i+1}^2
   \norm{Y(\E{H_{0,-1}\mid V_0}-
   \E{H_{0,-1}})}_2^2     }\\
   &+   \sqrt{ \sum_{i=1}^{N-1}\pr{2N-\pr{2i-1}+1}^2
   \norm{(\E{H_{0,-1}\mid V_0}-
   \E{H_{0,-1}})}_2^2     }\\
    \leq & C N^{3/2}\sup_{k\geq 1}\norm{H_{0,k}}_2.
  \end{align*}
This proves
$  \norm{U_{2N}-\E{U_{2N}}}_2\leq CN^{3/2}\sup_{k\geq 1}\norm{H_{0,k}}_2$.
In order to show the corresponding inequality for the index $2N+1$ instead of $2N$, we note that $U_{2N+1}-\E{U_{2N+1}}$ differs from
$U_{2N}-\E{U_{2N}}$ only by the term
$ \sum_{j=1}^{2N}\pr{H_{j,2N+1}-
\E{H_{j,2N+1}}}$, whose $\mathbb L^2$-norm is
smaller than $4N\sup_{k\geq 1}\norm{H_{0,k}}_2$.
This ends the proof of Lemma~\ref{lem:moment_inequality_U_stats}.
\end{proof}

\subsection{Tools for the proof of Lemma~\ref{lem:conv_m_dep}}
\begin{lemma}\label{lem:convergence_of_sigma_n}
 Let $\pr{Y_n}_{n\geq 1}$ be a sequence of random variables such that $\pr{Y_n^2}_{n\geq 1}$ is uniformly integrable and 
 $Y_n \to  N\pr{0,\sigma^2}$ in distribution with $\sigma>0$. Let
$Y'_n$ and $Y''_n$  be independent copies of $Y_n$ 
 and let $h \colon\R^2\to \R$ be a Lipschitz-continuous function. Then 
 \begin{equation*}
  \lim_{n\to\infty}\cov{h\pr{Y_n,Y'_n}  }{h\pr{Y_n,Y''_n}}  = 
\cov{h\pr{N,N'}  }{h\pr{N,N''}} ,
 \end{equation*}
where $N$, $N'$, $N'' $ are independent $N(0,\sigma^2)$-distributed random 
variables.
\end{lemma}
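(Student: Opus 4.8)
The plan is to expand the covariance and treat each resulting expectation separately, writing
\begin{equation*}
\cov{h\pr{Y_n,Y'_n}}{h\pr{Y_n,Y''_n}} = \E{h\pr{Y_n,Y'_n}h\pr{Y_n,Y''_n}} - \E{h\pr{Y_n,Y'_n}}\E{h\pr{Y_n,Y''_n}},
\end{equation*}
and showing that each of the three expectations on the right converges to its counterpart obtained by replacing $\pr{Y_n,Y'_n,Y''_n}$ with $\pr{N,N',N''}$. Subtracting then yields the claim.

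First I would establish the joint convergence in distribution $\pr{Y_n,Y'_n,Y''_n}\to\pr{N,N',N''}$. Since the three variables are independent for each $n$ and each marginal converges to $N\pr{0,\sigma^2}$, the joint characteristic function factorizes and converges to the product of the three Gaussian characteristic functions; L\'evy's continuity theorem then gives convergence to the law of three independent $N\pr{0,\sigma^2}$ variables.

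The delicate point is that convergence in distribution alone does not suffice, because $h$ is only Lipschitz-continuous and hence of at most linear growth, so the integrands $h\pr{x,y}$ and $h\pr{x,y}h\pr{x,z}$ grow linearly and quadratically. To pass to the limit I would invoke the standard upgrade from convergence in distribution to convergence of expectations for a continuous functional whose family of values is uniformly integrable. For the uniform integrability, Lipschitz-continuity gives $\abs{h\pr{Y_n,Y'_n}}\leq C\pr{1+\abs{Y_n}+\abs{Y'_n}}$, whence $h^2\pr{Y_n,Y'_n}\leq C'\pr{1+Y_n^2+\pr{Y'_n}^2}$ by the $c_r$-inequality. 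Since $\pr{Y_n^2}_{n\geq 1}$ is uniformly integrable by hypothesis and $\pr{Y'_n}^2$ shares its law, the dominating family is uniformly integrable, so $\pr{h^2\pr{Y_n,Y'_n}}_{n\geq 1}$ is uniformly integrable; this in particular forces uniform integrability of $\pr{h\pr{Y_n,Y'_n}}_{n\geq 1}$ itself. For the product, the elementary bound $\abs{h\pr{Y_n,Y'_n}h\pr{Y_n,Y''_n}}\leq \tfrac12\pr{h^2\pr{Y_n,Y'_n}+h^2\pr{Y_n,Y''_n}}$ dominates it by a uniformly integrable family, giving uniform integrability of the products as well.

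Combining the joint convergence with these uniform integrability bounds, each of $\E{h\pr{Y_n,Y'_n}h\pr{Y_n,Y''_n}}$, $\E{h\pr{Y_n,Y'_n}}$ and $\E{h\pr{Y_n,Y''_n}}$ converges to the corresponding Gaussian expectation, and the conclusion follows. The main obstacle is precisely the unboundedness of $h$: one cannot simply apply the continuous mapping theorem together with bounded convergence, so the uniform integrability step—exploiting that the squares of the $Y_n$ are uniformly integrable and that $Y'_n,Y''_n$ share the distribution of $Y_n$—is the heart of the argument.
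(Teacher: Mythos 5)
Your proof is correct and follows essentially the same route as the paper: both rest on the joint weak convergence of the independent triple $\pr{Y_n,Y'_n,Y''_n}$ to $\pr{N,N',N''}$ together with the uniform integrability of $h^2\pr{Y_n,Y'_n}$ deduced from the Lipschitz bound and the hypothesis on $\pr{Y_n^2}_{n\geq 1}$. The only difference is one of packaging: the paper passes through Skorokhod's representation theorem, shows $\mathbb L^2$-convergence of $h\pr{Z_n,Z'_n}$ to $h\pr{Z,Z'}$, and then uses $\mathbb L^2$-continuity of the covariance, whereas you expand the covariance and apply the standard ``weak convergence plus uniform integrability implies convergence of expectations'' theorem to each of the three terms --- the two arguments are interchangeable here.
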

\begin{proof}
  By independence,  the sequence of random vectors $\pr{Y_{n},Y'_{n},Y''_{n}}$ converges in 
distribution to $ \pr{N,N',N''}$. By Skorohod's representation theorem, there exist a probability space
 $(\tilde{\Omega},\tilde{\Fca},\tilde{\PP})$, sequences of random variables  $\pr{Z_{n}}_{n\geq 
1}$,  $\pr{Z'_{n}}_{n\geq 1}$ and $\pr{Z''_{n}}_{n\geq 1}$ and random variables $Z$, $Z'$ and $Z''$, each defined on $\tilde{\Omega}$, such that  for all $n\geq 1$, the vectors $\pr{Y_{n},Y'_{n},Y''_n}$ and 
$\pr{Z_{n},Z'_{n},Z''_n }$ have 
 the same distribution, $\pr{Z,Z',Z''}$ has the same distribution as $\pr{N,N',N''}$, and the sequence $\pr{Z_{n}}_{n\geq 1}$ (respectively  $\pr{Z'_{n}}_{n\geq 1}$ and $\pr{Z''_{n}}_{n\geq 1}$) 
 converges to $Z$ (respectively $Z'$ and $Z''$) almost surely.
Note that for each fixed $n$, it holds
\begin{equation*}
 \cov{h\pr{Y_n,Y'_n}  }{h\pr{Y_n,Y''_n}}=
 \cov{h\pr{Z_n,Z'_n}  }{h\pr{Z_n,Z''_n}}
\end{equation*}
as well as 
  \begin{equation*}
  \cov{h\pr{N,N'}  }{h\pr{N,N''}} =\cov{h\pr{Z,Z'}  }{h\pr{Z,Z''}}.
  \end{equation*}
Due to the elementary fact that $\cov{U_n}{V_n} \to \cov{U}{V}$ if $U_n\to U$ and 
$V_n\to V$ in $\mathbb L^2$, it hence suffices to show  
\begin{equation*}
 \norm{h\pr{Z_n,Z'_n} -h\pr{Z,Z'}  }_2\to 0.
\end{equation*}
Since $h$ is Lipschitz-continuous and the sequence
$\pr{Z_n^2+\pr{Z'_n}^2+Z^2+\pr{Z'}^2}_{n\geq 1}$ is uniformly integrable, the 
sequence 
$\pr{\pr{h\pr{Z_n,Z'_n} -h\pr{Z,Z'}}^2}_{n\geq 1}$ is uniformly integrable as well. 
By the continuity of $h$, 
the sequence $\pr{\pr{h\pr{Z_n,Z'_n} -h\pr{Z,Z'}}^2}_{n\geq 1}$ converges to
$0$ almost surely. Combined, this yields the desired $\mathbb L^2$-convergence and finishes the proof.
 \end{proof}
\end{appendices}

\textbf{Acknowledgment:} The authors would like to thank the referee for his/her comments that improved the presentation of the paper.

\def\cprime{$'$} \def\polhk#1{\setbox0=\hbox{#1}{\ooalign{\hidewidth
  \lower1.5ex\hbox{`}\hidewidth\crcr\unhbox0}}} \def\cprime{$'$}

\end{document}